\documentclass[12pt,a4paper,oneside,reqno]{article}
\usepackage{verbatim}
\usepackage[utf8]{inputenc}
\usepackage[english]{babel}
\usepackage[left=1in,right=1in,top=1in,bottom=1in]{geometry}
\usepackage{amsfonts,amssymb,amsmath,amsthm,mathtools}
\usepackage[symbol]{footmisc}
\usepackage{enumerate}
\usepackage{cite}
\usepackage{hyperref}
\usepackage{fancyhdr}
\usepackage{fouriernc}

\hypersetup{colorlinks=true,linkcolor=black,citecolor=black,urlcolor=blue}
\theoremstyle{plain}
\newtheorem{theorem}{Theorem}
\newtheorem{lemma}[theorem]{Lemma}
\newtheorem{proposition}[theorem]{Proposition}

\theoremstyle{definition}
\newtheorem{definition}[theorem]{Definition}
\newtheorem{remark}[theorem]{Remark}
\newtheorem{example}[theorem]{Example}
\theoremstyle{plain}
\newtheorem*{theoremA}{Theorem A}
\newtheorem*{theoremB}{Theorem B}
\newtheorem*{conjectureGS}{Conjecture (Grishkov--Shestakov)}
\numberwithin{equation}{section}

\newcommand{\C}{\mathbb C}
\newcommand{\Cs}{\mathbb C\llbracket s\rrbracket}
\newcommand{\Or}{\mathcal O}
\newcommand{\vp}{\varphi}
\newcommand{\lam}{\lambda}
\newcommand{\Lam}{\Lambda}
\newcommand{\SBL}[1]{\mathrm{SBL}(#1)}
\makeatletter
\@ifundefined{llbracket}{\usepackage{stmaryrd}}{}
\makeatother

\begin{document}

\bigskip

\noindent{\Large Simple finite-dimensional binary Lie superalgebras}%
\footnote{%
The work is supported by FCT 2023.08031.CEECIND and UID/00212/2025.}%
\footnote{%
The original concept and the main results were developed by H.A.\ and I.K.
The AI system Aristotle (Harmonic, \url{https://aristotle.harmonic.fun/})
assisted the authors in the literature search, error verification and formal
verification in Lean.}

\bigskip

\begin{center}
\textbf{Hani Abdelwahab\footnote{%
Department of Mathematics, Mansoura University, Mansoura, Egypt; \
haniamar1985@gmail.com} \& Ivan Kaygorodov\footnote{%
CMA-UBI, University of Beira Interior, Covilh\~{a}, Portugal; \
kaygorodov.ivan@gmail.com}}
\end{center}

\noindent \textbf{Abstract:} \textit{We prove that every complex
finite-dimensional simple binary Lie superalgebra with non-zero odd part is
either a simple Lie superalgebra, and hence is one of the superalgebras of
Kac's list, or the two-dimensional non-Lie superalgebra $B_2.$ This confirms
the Grishkov--Shestakov conjecture and, together with the theorems of
Grishkov and Filippov on simple binary Lie algebras, gives the answer to
Shestakov's question from the Dniester Notebook (Problem 3.100(d)) over the
field of complex numbers. }

\medskip

\noindent \textbf{Keywords}: \textit{binary Lie superalgebra, Lie
superalgebra, simple superalgebra.}

\medskip

\noindent \textbf{MSC2020}: 17A70 (primary); 17B70, 17D10 (secondary).



\section*{Introduction}

\addcontentsline{toc}{section}{Introduction}

\noindent\textbf{Binary Lie algebras.} An algebra is called \emph{binary Lie}
if any two of its elements generate a Lie subalgebra. These algebras were
introduced by Malcev~\cite{Mal} in his study of analytic loops: the tangent
algebra of a diassociative loop is binary Lie, exactly as the tangent
algebra of a Moufang loop is a Malcev algebra. Every Lie algebra and every
Malcev algebra is binary Lie. Gainov~\cite{Gai} proved that over a field of
characteristic $\neq 2$ the variety of binary Lie algebras is defined by the
anticommutativity and the identity

\begin{center}
$\bigl((xy)y\bigr)x+\bigl((yx)x\bigr)y=0.$
\end{center}

The structure theory of finite-dimensional binary Lie algebras of
characteristic zero was developed by Grishkov~\cite{G77,G80,G88}. He proved
that a finite-dimensional simple binary Lie algebra over an algebraically
closed field of characteristic zero is a Malcev algebra~\cite{G80}; this
solved Problem~1.28 of Gainov from the Dniester Notebook~\cite{DN}. By the
theorem of Filippov~\cite{Fil} such an algebra is either a simple Lie
algebra or the seven-dimensional simple Malcev algebra. Grishkov also proved
that every finite-dimensional solvable binary Lie algebra contains a
non-zero abelian ideal~\cite{G88}, which answered his own Problem~2.35 of
the Dniester Notebook. Binary Lie algebras have been studied from many other
points of view. Binary Lie algebras satisfying the third Engel condition
were studied by Filippov~\cite{Fil08}. Gainov~\cite{Gai69} studied binary
Lie algebras of characteristic~$2$ and proved that over a field with more
than three elements their variety has a finite basis of identities; this
answered Problem~1.163 of Shirshov and Gainov from the Dniester Notebook~%
\cite{DN}. Gainov~\cite{Gai63} classified binary Lie algebras of dimension
at most four and, in particular, showed that every three-dimensional binary
Lie algebra is a Lie algebra. Binary Lie algebras of small dimensions were
further studied by Kuzmin~\cite{Kuz98}. Arenas and Shestakov~\cite{AS}
considered the speciality problem for binary Lie algebras. Several other
problems on binary Lie algebras are still open; see, for example,
Problems~2.33, 2.58, 2.106, 3.34 and~3.87 of the Dniester Notebook~\cite{DN}%
. For more recent results see~\cite{GRS22,GRS} and the references therein.

\bigskip \noindent\textbf{Binary Lie superalgebras.} Let $G=G_0\oplus G_1$
be the Grassmann algebra of a countable-dimensional vector space with its
natural $\mathbb{Z}_2$-grading. A superalgebra $B=B_0\oplus B_1$ belongs to
a variety of algebras $\mathfrak{V}$ if its Grassmann envelope $%
G(B)=G_0\otimes B_0+G_1\otimes B_1$ belongs to $\mathfrak{V}$. The simple
finite-dimensional superalgebras are known for several classical varieties;
in particular, complex simple finite-dimensional Lie superalgebras were
classified by Kac~\cite{Kac}. In the fourth edition of the Dniester
Notebook, Shestakov asked to describe the simple finite-dimensional
superalgebras for some other varieties, among them for the variety of binary
Lie algebras:

\medskip \noindent\textbf{Problem 3.100(d)} (Shestakov, \cite{DN}). \textit{%
Describe all simple finite-dimensional binary Lie superalgebras.} \medskip

Binary Lie superalgebras were studied by Grishkov, Rasskazova and Shestakov~%
\cite{GRS22,GRS}. In~\cite{GRS22} they found new examples of binary Lie
superalgebras; the simplest one is the two-dimensional superalgebra $B_2$
(Example~\ref{ex:B2}), which is simple and is not a Lie superalgebra. It is
the only known simple binary Lie superalgebra with non-zero odd part that is
not a Lie superalgebra, and this led to the following conjecture of Grishkov
and Shestakov (see~\cite{GRS}).

\begin{conjectureGS}[{see {\protect\cite[Conjecture~1.1]{GRS}}}]
Let $B=B_0\oplus B_1$ be a complex finite-dimensional simple binary Lie
superalgebra with $B_1\neq0$. Then $B$ is a Lie superalgebra or $\dim B=2$.
\end{conjectureGS}

They also proved that the even part of such a superalgebra is solvable if $B$
is not a Lie superalgebra (Theorem~\ref{thm:GRS} below). Abdelwahab,
Kaygorodov and Khudoyberdiyev~\cite{AKK26} obtained the algebraic
classification of three-dimensional binary Lie superalgebras and proved the
conjecture for three-dimensional binary Lie superalgebras.

\bigskip \noindent\textbf{Our results.} We prove the Grishkov--Shestakov
conjecture. Our main result is the following theorem.

\begin{theoremA}
Let $B=B_0\oplus B_1$ be a complex finite-dimensional simple binary Lie
superalgebra with non-zero odd part $B_1$. Then $B$ is isomorphic to one of
the following superalgebras:

\begin{enumerate}
\item 
\lbrack \textrm{(1)] }

\item \textrm{a simple finite-dimensional Lie superalgebra with non-zero odd
part; these superalgebras are classified by Kac~\cite{Kac}; }

\item \textrm{the two-dimensional superalgebra $B_2$ of Example~\ref{ex:B2}. 
}
\end{enumerate}
\end{theoremA}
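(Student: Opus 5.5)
The plan is to assume that $B$ is not a Lie superalgebra and show that $\dim B=2$ and $B\cong B_2$. By Theorem~\ref{thm:GRS} of Grishkov--Rasskazova--Shestakov, the even part $B_0$ is then solvable. Since two even elements generate a Lie subalgebra, $B_0$ is an honest binary Lie algebra. By Grishkov's theorem~\cite{G88} it contains a non-zero abelian ideal, and more usefully, Lie's theorem applies to the solvable Lie algebras generated by pairs of elements.

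First I would linearize the super-version of Gainov's identity and write out its components on $B_0\times B_0$, $B_0\times B_1$ and $B_1\times B_1$. These say how $\mathrm{ad}$ of an even element acts on $B_1$ and how the symmetric product $B_1\times B_1\to B_0$ behaves. The key intermediate goal is that $B_1$ decomposes into generalized weight spaces for a suitable abelian subalgebra $H$ of $B_0$, for instance a Cartan-type (nilpotent, self-normalizing) subalgebra of the solvable $B_0$. The weight functions should be additive on products, $B_\alpha B_\beta\subseteq B_{\alpha+\beta}$, which I would derive from the binary Lie identities restricted to pairs. I would then show that, by simplicity, the ideal generated by $B_1$ is all of $B$, so that $B_0=B_1B_1+(B_1B_1)B_1\cdots$ in the appropriate sense. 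Since $B_0$ is solvable, the weights occurring in $B_0$ are sums of odd weights.

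The second step is to exploit the odd self-products. For odd $x$, the elements $x$ and $x^2$ generate a Lie subsuperalgebra, which forces strong identities such as $(x^2)x=0$-type relations, with corrections specific to the non-Lie case, as in $B_2$. I would show that some odd element $x$ with $x^2\ne0$ exists; otherwise $B_1B_1=0$ and $B_1$ would be an ideal. Next I would show that $\mathrm{span}\{x^2, x\}$ behaves exactly like $B_2$. Finally, the complement must be killed using simplicity: take the ideal generated by this copy, or the annihilator-type subspace of elements $y$ with $yx=yx^2=0$, and show it is an ideal. This would give $B=\langle x,x^2\rangle$.

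The main obstacle will be this last step: proving that the non-Lie part cannot spread across several weight spaces, that is, ruling out larger simple examples built from several $B_2$-like pieces glued by a solvable even part. My first attempt would be to consider the Jacobian $J(a,b,c)$ and the ideal it generates. In a simple non-Lie $B$ this ideal is $B$. Using the identities, I would try to show that the Jacobians are annihilated by $B_0^{(1)}=B_0B_0$ and are supported only in weight $0$ and the weights of $x$. A dimension count would then give $\dim B_0=\dim B_1=1$, and comparing with Example~\ref{ex:B2} would give $B\cong B_2$.
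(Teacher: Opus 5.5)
Your opening reduction matches the paper: a Lie $B$ is on Kac's list, and otherwise Theorem~\ref{thm:GRS} makes $B_0$ solvable. After that, the plan rests on two claims that are false, and both already fail in the target example $B_2$.

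\begin{itemize}
\item The additivity $B_\alpha B_\beta\subseteq B_{\alpha+\beta}$ does not hold. In $B_2$ take $H=B_0=\C e$. Then $x$ has weight $1$ (as $ex=x$), but $xx=e$ has weight $0\neq 1+1$. Since $B_2$ satisfies all the binary Lie identities, no argument from them can yield additivity on $B_1B_1$; its failure there is exactly where the non-Lie behaviour lives.
\item The elements $x$ and $x^2$ need not generate a Lie subsuperalgebra. In $B_2$ the single odd element $x$ generates everything, $(x^2)x=x\neq0$, and $B_2$ is not Lie. The binary property of a superalgebra is a condition on its Grassmann envelope, where $(g\otimes x)^2=g^2\otimes x^2=0$ for $g\in G_1$. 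So it says nothing about subsuperalgebras generated by one or two odd elements.
\end{itemize}

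A smaller gap: Lie's theorem applied to two-generated subalgebras of $B_0$ does not give a common eigenvector of all of $B_0$ on $B_1$. For that you need Grishkov's results on binary Lie modules over solvable binary Lie algebras (Theorem~\ref{thm:G} and Lemma~\ref{lem:normal}).

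More importantly, the step you yourself call the main obstacle, excluding larger simple examples, has no mechanism behind it. Jacobians ``supported only in weight $0$'' and ``a dimension count'' are hopes, not arguments.

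The paper's approach avoids a full weight decomposition. A single one-dimensional quotient of the $B_0$-module $B_1$ gives a weight functional $(\lam,\vp)$ with $\vp(ax)=\lam(a)\vp(x)$, $\lam(B_0B_0)=0$ and $\lam\neq0$ (Proposition~\ref{prop:exist}).

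The key, and hardest, step is the identity $\lam\bigl((ax)y\bigr)=\lam(a)\lam(xy)$ (Theorem~\ref{thm:sigma}). The identities alone give only the weight formula~\eqref{eq:WF}, which carries a defect term $\sigma$. Killing $\sigma$ requires the formal deformation $\vp\circ e^{s\Theta}$ over $\Cs$ together with Cayley--Hamilton.

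Granting this identity, the argument closes quickly. Put $B_1^{\perp}=\{r\in B_1:\lam(B_1r)=0\}$.
\begin{itemize}
\item $\ker\lam\oplus B_1^{\perp}$ is a proper graded ideal. By simplicity it is zero, so $\dim B_0=1$ and the symmetric form $\lam(xy)$ on $B_1$ is non-degenerate.
\item $\mathrm{SBL}(x,y,z,t)$ with odd arguments yields $\lam(xy)\lam(zt)=\lam(xz)\lam(yt)$. Together with non-degeneracy this forces $\dim B_1=1$, hence $B\cong B_2$.
\end{itemize}

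Nothing in your proposal plays the role of this identity, so as it stands the argument does not reach the conclusion.
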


Together with the results of Grishkov and Filippov on the purely even case
we obtain the answer to Problem~3.100(d) in the complex case.

\begin{theoremB}
Every complex finite-dimensional simple binary Lie superalgebra is
isomorphic to a simple finite-dimensional Lie superalgebra (in particular,
to a simple Lie algebra if the odd part is zero), to the seven-dimensional
simple Malcev algebra, or to the superalgebra $B_2$.
\end{theoremB}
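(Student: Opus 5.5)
Theorem B is essentially bookkeeping once Theorem A is available, so the plan is to split by whether the odd part vanishes. Let $B=B_0\oplus B_1$ be a complex finite-dimensional simple binary Lie superalgebra.

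\emph{Case $B_1=0$.} Then $B=B_0$ is an ordinary algebra, and its Grassmann envelope is $G_0\otimes B_0$. This envelope is binary Lie if and only if $B_0$ is binary Lie, because $G_0$ is a commutative associative algebra and the defining identities (anticommutativity and Gainov's identity $((xy)y)x+((yx)x)y=0$) are multilinear after linearization over characteristic zero. Graded simplicity of $B$ coincides with ordinary simplicity here, since every ideal of $B_0$ is automatically graded. So $B_0$ is a simple finite-dimensional binary Lie algebra over $\C$. By Grishkov's theorem~\cite{G80}, it is a Malcev algebra. By Filippov's theorem~\cite{Fil}, it is therefore either a simple Lie algebra, which is a simple Lie superalgebra with zero odd part, or the seven-dimensional simple Malcev algebra.

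\emph{Case $B_1\neq 0$.} Theorem A applies directly. It gives that $B$ is either a simple Lie superalgebra from Kac's list or isomorphic to $B_2$.

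Combining the two cases yields exactly the list in Theorem B. The only point needing care is the first case. One must check that the superalgebra notion of being binary Lie, defined via the Grassmann envelope, and the notion of simplicity both reduce to the classical ones when $B_1=0$. This is where I would spend a line, but no genuine obstacle is expected. All the substantive work is contained in Theorem A, whose proof presumably builds on Theorem~\ref{thm:GRS}, which gives solvability of the even part.
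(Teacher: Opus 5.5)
Your proposal is correct and follows the same route as the paper's proof: split on whether $B_1=0$, apply Grishkov's theorem~\cite{G80} and then Filippov's theorem~\cite{Fil} in the purely even case, and apply Theorem~A when $B_1\neq0$. Your added check, that the Grassmann-envelope definition and graded simplicity reduce to the classical notions when $B_1=0$, is correct; the paper leaves this implicit, since with $B_1=0$ Definition~\ref{def:SBL} is exactly anticommutativity plus the all-signs-$+1$ identity.
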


\newpage

\section{Preliminaries}

\label{sec:prel}

Throughout the paper $R$ is a commutative associative ring with identity in
which $2$ is invertible; for instance, $R=\mathbb{C}$ or $R$ is a
commutative associative $\mathbb{C}$-algebra with identity, such as $\mathbb{%
C}\llbracket s\rrbracket$ or $\mathbb{C}\llbracket s\rrbracket/(s^k)$. Thus
all rings of scalars are associative, while the superalgebras over them are,
in general, not associative. A \emph{superalgebra over $R$} is an $R$-module 
$S=S_0\oplus S_1$ with an $R$-bilinear multiplication such that $%
S_iS_j\subseteq S_{i+j}$ (indices modulo $2$). Elements of $S_0\cup S_1$ are 
\emph{homogeneous}, and $\bar x\in\{0,1\}$ denotes the parity of a
homogeneous element $x$.

\begin{definition}[see~\protect\cite{GRS}]
\label{def:SBL} A superalgebra $S=S_0\oplus S_1$ over $R$ is a \emph{binary
Lie superalgebra} if for all homogeneous $x,y,z,t\in S$ 
\begin{equation}  \label{eq:anti}
xy=-(-1)^{\bar x\bar y}yx
\end{equation}
and 
\begin{multline}  \label{eq:SBL}
\mathrm{SBL}(x,y,z,t):=\bigl((xy)z\bigr)t-x\bigl(y(zt)\bigr) +(-1)^{\bar
x\bar y}\Bigl\{y\bigl((xz)t\bigr)+y\bigl(x(zt)\bigr)-\bigl(y(xz)\bigr)t%
\Bigr\} \\
+(-1)^{\bar z\bar t}\Bigl\{x\bigl((yt)z\bigr)-\bigl((xy)t\bigr)z-\bigl(x(yt)%
\bigr)z\Bigr\}=0 .
\end{multline}
\end{definition}

If all arguments are even, all signs in~\eqref{eq:SBL} are $+1$. A complex
anticommutative algebra is binary Lie if and only if it satisfies~%
\eqref{eq:SBL} with all signs $+1$~\cite{Gai,GRS}, and for complex
superalgebras Definition~\ref{def:SBL} is equivalent to the definition
through the Grassmann envelope~\cite{GRS}. In a binary Lie superalgebra the
identity~\eqref{eq:anti} reads 
\begin{equation}  \label{eq:rules}
ab=-ba,\qquad xa=-ax,\qquad xy=yx\qquad (a,b\in S_0,\ x,y\in S_1);
\end{equation}
we always write a product of an even and an odd element with the even factor
first.

A complex superalgebra $S$ is \emph{simple} if $S^2\neq0$ and $0$ and $S$
are its only graded ideals. A \emph{Lie superalgebra} is a superalgebra
satisfying~\eqref{eq:anti} and the super Jacobi identity

\begin{center}
$(-1)^{\bar x\bar z}(xy)z+(-1)^{\bar y\bar x}(yz)x+(-1)^{\bar z\bar y}(zx)y=0
$.
\end{center}

\begin{example}[see \protect\cite{GRS22}]
\label{ex:B2} Let $B_2=\mathbb{C }e\oplus\mathbb{C }x$ with $e$ even, $x$
odd and 
\begin{equation*}
ee=0,\qquad ex=-xe=x,\qquad xx=e . 
\end{equation*}
This is a binary Lie superalgebra~\cite{AS,GRS22}. It is simple: $e=xx$ and $%
x=ex$, so $B_2^2=B_2$, and a non-zero graded ideal contains $e$ or $x$,
hence both. It is not a Lie superalgebra, since the super Jacobi identity
for $(x,x,x)$ gives $-3(xx)x=-3x\neq0$.
\end{example}

The following criterion is immediate from~\eqref{eq:rules}: the product $SI$
is spanned by $S_0I_0$, $S_0I_1$, $I_0S_1$ and $S_1I_1$, and $IS=SI$.

\begin{lemma}
\label{lem:ideal} Let $S=S_0\oplus S_1$ be a binary Lie superalgebra over $R$
and let $I_0\subseteq S_0$, $I_1\subseteq S_1$ be submodules. Then $%
I_0\oplus I_1$ is a graded ideal of $S$ if and only if $S_0I_0\subseteq I_0$%
, $S_0I_1\subseteq I_1$, $I_0S_1\subseteq I_1$ and $S_1I_1\subseteq I_0$.
\end{lemma}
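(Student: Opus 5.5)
The plan is to reduce the ideal condition to a statement about products with homogeneous elements, and then use the sign rules \eqref{eq:rules} to rewrite each such product in the normalized order (even factor first, or two odd factors). Set $I=I_0\oplus I_1$. By definition, $I$ is a graded ideal if and only if $SI\subseteq I$ and $IS\subseteq I$. Since $S=S_0\oplus S_1$ and multiplication is bilinear, $SI$ is the sum of the four pieces $S_0I_0$, $S_0I_1$, $S_1I_0$ and $S_1I_1$. Likewise $IS$ is the sum of $I_0S_0$, $I_0S_1$, $I_1S_0$ and $I_1S_1$.

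The first step is to show $IS=SI$, so that only one side has to be checked. The relations \eqref{eq:rules} hold elementwise and give:
\begin{itemize}
\item[] $I_0S_0=S_0I_0$, because $ab=-ba$ and submodules are closed under negation;
\item[] $I_1S_0=S_0I_1$, because $xa=-ax$;
\item[] $I_1S_1=S_1I_1$, because $xy=yx$;
\item[] $S_1I_0=I_0S_1$, again by $xa=-ax$, with the even factor now taken from $I_0$.
\end{itemize}
Hence $SI=IS$, and this module is spanned by $S_0I_0$, $S_0I_1$, $I_0S_1$ and $S_1I_1$. All four of these are written with the even factor first, as the paper requires.

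The second step is to sort the four pieces by degree using $S_iS_j\subseteq S_{i+j}$. The pieces $S_0I_0$ and $S_1I_1$ lie in $S_0$, while $S_0I_1$ and $I_0S_1$ lie in $S_1$. Because $I=I_0\oplus I_1$ with $I_k\subseteq S_k$, we have $I\cap S_0=I_0$ and $I\cap S_1=I_1$. So a piece contained in $S_0$ lies in $I$ exactly when it lies in $I_0$, and a piece contained in $S_1$ lies in $I$ exactly when it lies in $I_1$. Therefore $SI\subseteq I$ is equivalent to the four inclusions
\[
S_0I_0\subseteq I_0,\qquad S_0I_1\subseteq I_1,\qquad I_0S_1\subseteq I_1,\qquad S_1I_1\subseteq I_0 .
\]
Together with $IS=SI$, this proves both directions of the lemma.

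None of the steps is a real obstacle. The only point needing care is that the sign rules are applied to homogeneous elements and then extended to the submodules. This extension is valid because each piece is an $R$-submodule generated by products of homogeneous elements, and it is stable under negation.
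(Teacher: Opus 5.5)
Your proposal is correct and follows the paper's own argument. The paper also notes that \eqref{eq:rules} gives $IS=SI$, with this module spanned by $S_0I_0$, $S_0I_1$, $I_0S_1$ and $S_1I_1$; your sorting of these pieces by degree using $I\cap S_k=I_k$ is the step the paper leaves implicit.
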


\medskip \noindent\textbf{Grishkov's theorems.} Let $L$ be a complex binary
Lie algebra and let $V$ be a complex vector space with a bilinear map $%
L\times V\to V$, $(g,v)\mapsto gv$. The \emph{split null extension} $%
L\ltimes V$ is the space $L\oplus V$ with the multiplication $%
(g+v)(g^{\prime }+v^{\prime })=gg^{\prime }+(gv^{\prime }-g^{\prime }v)$.
The space $V$ is a \emph{binary Lie $L$-module} if $L\ltimes V$ is a binary
Lie algebra. If $U$ is a submodule of $V$, then $U$ is an ideal of $L\ltimes
V$ and $(L\ltimes V)/U\cong L\ltimes(V/U)$; since quotients of binary Lie
algebras are binary Lie, $V/U$ is again a binary Lie $L$-module.

\begin{lemma}
\label{lem:module} Let $B=A\oplus M$ be a complex binary Lie superalgebra.
Then $A$ is a binary Lie algebra and $M$ is a binary Lie $A$-module with
respect to $(a,x)\mapsto ax$.
\end{lemma}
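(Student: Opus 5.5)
Here $A=B_0$ and $M=B_1$. The plan is to check everything directly from Definition~\ref{def:SBL}, restricting the identities to the even part and to arguments with exactly one odd entry.

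\textbf{Step 1: $A$ is a binary Lie algebra.} Take all four arguments of \eqref{eq:SBL} in $A$. All signs become $+1$, and \eqref{eq:anti} becomes ordinary anticommutativity, so $A$ satisfies the even identity $\SBL{a,b,c,d}=0$. By the cited criterion of Gainov and Grishkov--Rasskazova--Shestakov, a complex anticommutative algebra satisfying \eqref{eq:SBL} with all signs $+1$ is binary Lie. Hence $A$ is binary Lie.

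\textbf{Step 2: $M$ is a binary Lie $A$-module.} We must show that $L=A\ltimes M$, with product $(a+x)(b+y)=ab+(ay-bx)$, is a binary Lie algebra. This product agrees with the product of $B$ with the $M\cdot M$ part deleted, since $bx=-xb$ by \eqref{eq:rules}. Anticommutativity of $L$ is clear. For the identity, use the same criterion again: it suffices that $\SBL{u_1,u_2,u_3,u_4}=0$ in $L$ with all signs $+1$. Since $\SBL{}$ is multilinear, we may take each $u_i$ in $A$ or in $M$.
\begin{itemize}
\item If two or more arguments lie in $M$, every term vanishes in $L$. Indeed, $A$ acts on $M$, and products of two elements of $M$ are zero, so every monomial lands in $MM=0$.
\item If no argument lies in $M$, this is Step 1.
\item There remains the case of exactly one argument in $M$.
\end{itemize}

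\textbf{Step 3: the case of exactly one odd argument.} Here each monomial in $B$ involves only $A\cdot A$ and $A\cdot M$ products, because an odd element is never multiplied by another odd element. So the value of each monomial is the same in $B$ and in $L$. The only possible discrepancy is the signs. In \eqref{eq:SBL} the signs are $(-1)^{\bar x\bar y}$ and $(-1)^{\bar z\bar t}$, and with only one odd argument both products $\bar x\bar y$ and $\bar z\bar t$ are $0$. So the super identity in $B$ coincides termwise with the even identity in $L$. Since it holds in $B$, it holds in $L$.

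\textbf{Main obstacle.} The only delicate point is to ensure that the criterion (binary Lie $\iff$ \eqref{eq:SBL} with signs $+1$) is applied correctly to $L$. One must also check that the convention "even factor first'' matches the semidirect product formula, including the minus sign in $-bx$. Both checks are routine bookkeeping with \eqref{eq:rules}.
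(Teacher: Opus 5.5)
Your proof is correct. The paper states Lemma~\ref{lem:module} without proof, treating it as immediate, so there is no argument of the paper's to compare against; your verification supplies one.

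Your three steps work as written:
\begin{itemize}
\item Restricting \eqref{eq:SBL} to even arguments gives Step~1.
\item In $L=A\ltimes M$ the $\mathbb{Z}$-grading $L_0=A$, $L_1=M$, $L_{\ge 2}=0$ kills every term with two or more arguments in $M$.
\item With exactly one odd argument, no product of two odd elements occurs. The products $ab$, $ax$ and $xa=-ax$ coincide in $B$ and in $L$, and both signs $(-1)^{\bar x\bar y}$, $(-1)^{\bar z\bar t}$ equal $+1$.
\end{itemize}

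For comparison, there is a shorter route through the Grassmann-envelope form of the definition, whose equivalence with Definition~\ref{def:SBL} the paper cites from \cite{GRS}. Fix a generator $\xi\in G_1$. The subspace $1\otimes A+\xi\otimes M$ of $G(B)$ is closed under multiplication, because $(\xi\otimes x)(\xi\otimes y)=\xi^2\otimes xy=0$ and $(\xi\otimes x)(1\otimes a)=-\xi\otimes ax$. Moreover, $a+x\mapsto 1\otimes a+\xi\otimes x$ is an isomorphism from $A\ltimes M$ onto this subalgebra. Since subalgebras of binary Lie algebras are binary Lie, $A\ltimes M$ is binary Lie, and so is its subalgebra $A$.

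That route avoids all sign bookkeeping but depends on the equivalence of the two definitions of binary Lie superalgebra. Yours works directly from the identity definition, together with the criterion that an anticommutative algebra satisfying \eqref{eq:SBL} with all signs $+1$ is binary Lie. That criterion is also a cited fact in the paper, so both arguments rest on one imported result.
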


A chain of ideals $0=J_0\subseteq J_1\subseteq\dots\subseteq J_r=L$ of an
algebra $L$ is a \emph{solvable normal series} if $J_{k+1}^2\subseteq J_k$
for all $k$.

\begin{theorem}[Grishkov]
\label{thm:G} Let $L$ be a complex finite-dimensional solvable binary Lie
algebra and let $V$ be a finite-dimensional binary Lie $L$-module.

\begin{enumerate}
\item 
\lbrack \textrm{(a)] }

\item \textrm{\cite[Theorem~A]{G80} The elements of the ideal $L^2$ act on $V
$ by nilpotent operators. }

\item \textrm{\cite[Theorem~1]{G88} If $L\neq0$, then $L$ has a non-zero
abelian ideal. }

\item \textrm{\cite[Theorem~3]{G77} If $L$ has a solvable normal series,
then every irreducible binary Lie $L$-module is one-dimensional. }
\end{enumerate}
\end{theorem}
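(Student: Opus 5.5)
The statement collects three results of Grishkov, so the plan is not to reprove them. Instead I would reduce each item to the form in which it appears in \cite{G80,G88,G77}, using the split null extension $A=L\ltimes V$. By definition of a binary Lie $L$-module, $A$ is a finite-dimensional binary Lie algebra. Moreover $V$ is an ideal of $A$ with $V^2=0$ and $A/V\cong L$. Since $L$ is solvable, $A$ is solvable as well: $A^{(k)}\subseteq V$ once $L^{(k)}=0$, and then $A^{(k+1)}=0$. This observation is the common bridge for all three items.

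For (a), I would apply \cite[Theorem~A]{G80} to $A$. I read that result as saying that in a finite-dimensional solvable binary Lie algebra over an algebraically closed field of characteristic zero, the elements of the square act nilpotently, i.e.\ every element of $A^2$ has nilpotent multiplication operator on $A$. For $g\in L^2$ we have $g\in A^2$, and $V$ is invariant under the multiplication operator $R_g$ of $A$. The restriction of $R_g$ to $V$ is, up to sign, the given action $v\mapsto gv$. Hence this action is nilpotent. If the cited formulation is instead stated directly for modules, then (a) is a literal quotation.

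Item (b) is \cite[Theorem~1]{G88} verbatim, applied to $L$ itself, so nothing needs to be done there.

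For (c), the main point is to check that the hypotheses of \cite[Theorem~3]{G77} are satisfied. Since I do not know whether that theorem is stated for modules or for the algebra $A$, I would cover both readings:
\begin{enumerate}
\item[(i)] If it is stated for modules, then (c) follows directly from the hypotheses on $L$.
\item[(ii)] If it is stated for the algebra, then, given a solvable normal series $0=J_0\subseteq\dots\subseteq J_r=L$, I would form the chain of ideals $0\subseteq V\subseteq J_1\oplus V\subseteq\dots\subseteq J_r\oplus V=A$ of $A$. Each $J_k\oplus V$ is an ideal of $A$ because $LV\subseteq V$, and $(J_{k+1}\oplus V)^2\subseteq J_k\oplus V$, so this is a solvable normal series of $A$. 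Now take $V$ irreducible and nonzero. Every $A$-invariant subspace of $V$ is an $L$-submodule, and irreducibility of $V$ is exactly the statement that $V$ is a minimal ideal of $A$ contained in $V$. The one-dimensionality result then gives $\dim V=1$.
\end{enumerate}

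The main obstacle is purely bookkeeping: matching the notions of solvable normal series, irreducible module and binary Lie module used in the original papers (often phrased via the regular or split null extension, and sometimes over an algebraically closed field of characteristic $0$) with the definitions given here. I would resolve this by passing through $L\ltimes V$ as above in each case.
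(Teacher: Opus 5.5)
Your proposal is correct and matches the paper. The paper gives no proof of this theorem and simply quotes the three results from Grishkov's papers, as you propose to do. Your bridging through the split null extension $L\ltimes V$ is sound: it is solvable, $g\in L^2\subseteq(L\ltimes V)^2$ with the restriction of $R_g$ to the ideal $V$ equal to minus the action of $g$, and in the lifted series $0\subseteq V\subseteq J_1\oplus V\subseteq\dots\subseteq L\ltimes V$ the ideals contained in $V$ are exactly the $L$-submodules. This bridging only matters if the cited theorems are phrased for algebras rather than for modules.
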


\begin{lemma}
\label{lem:normal} A complex finite-dimensional solvable binary Lie algebra $%
L$ has a solvable normal series. Consequently, every irreducible
finite-dimensional binary Lie $L$-module is one-dimensional.
\end{lemma}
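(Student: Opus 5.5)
We build a solvable normal series by induction on $\dim L$, using Theorem~\ref{thm:G}(b) to get a non-zero abelian ideal at each step. Then the second assertion will follow from Theorem~\ref{thm:G}(c).

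If $L=0$, the trivial chain $0=J_0=L$ is a solvable normal series. Suppose $L\neq0$. By Theorem~\ref{thm:G}(b), $L$ has a non-zero abelian ideal $J_1$, so $J_1^2=0$. The quotient $\bar L=L/J_1$ is again a binary Lie algebra, since the defining identities pass to quotients. It is solvable, because its derived series is the image of that of $L$. It also has strictly smaller dimension. By the induction hypothesis, $\bar L$ has a solvable normal series $0=\bar J_0\subseteq\bar J_1\subseteq\dots\subseteq\bar J_s=\bar L$.

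Let $\pi\colon L\to\bar L$ be the projection, and set $J_0=0$ and $J_{k+1}=\pi^{-1}(\bar J_k)$ for $0\le k\le s$. Each $J_{k+1}$ is an ideal of $L$, being the preimage of an ideal under a surjective homomorphism. We have $J_1=\pi^{-1}(0)$, which is the abelian ideal chosen above, and $J_{s+1}=L$.

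We check the condition $J_{k+1}^2\subseteq J_k$.
\begin{itemize}
\item For $k=0$ it says $J_1^2=0$, which holds since $J_1$ is abelian.
\item For $k\ge1$ we have $\pi(J_{k+1}^2)=\bar J_k^2\subseteq\bar J_{k-1}$. Hence $J_{k+1}^2\subseteq\pi^{-1}(\bar J_{k-1})=J_k$.
\end{itemize}
Thus $0=J_0\subseteq J_1\subseteq\dots\subseteq J_{s+1}=L$ is a solvable normal series of $L$.

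The consequence is now immediate. Since $L$ has a solvable normal series, Theorem~\ref{thm:G}(c) applies, and every irreducible finite-dimensional binary Lie $L$-module is one-dimensional.

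There is no real obstacle here. The only care needed is that the members of the series must be ideals of $L$ itself, not merely ideals of the preceding term. Taking preimages of ideals of the quotient ensures this.
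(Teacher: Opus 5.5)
Your proof is correct and matches the paper's argument essentially step for step: induction on $\dim L$ using a non-zero abelian ideal from Theorem~\ref{thm:G}(b), pulling back the solvable normal series of $L/J_1$ via $\pi^{-1}$, checking $J_1^2=0$ and $J_{k+1}^2\subseteq J_k$ for $k\ge1$, and finally invoking Theorem~\ref{thm:G}(c). You also state explicitly that $L/J_1$ is again a solvable binary Lie algebra of smaller dimension, which the paper leaves implicit.
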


\begin{proof}
We use induction on $\dim L$; the case $L=0$ is clear. Let $L\neq0$, let $I$ be a non-zero
abelian ideal of $L$ (Theorem~\ref{thm:G}(b)) and let $\pi\colon L\to L/I$ be the canonical map.
By induction, $L/I$ has a solvable normal series $0=\bar J_0\subseteq\dots\subseteq\bar J_r=L/I$.
Put $J_0=0$ and $J_{k+1}=\pi^{-1}(\bar J_k)$, $0\le k\le r$. Then $J_1^2=I^2=0$ and
$\pi(J_{k+1}^2)\subseteq\bar J_k^2\subseteq\bar J_{k-1}$, that is, $J_{k+1}^2\subseteq J_k$ for
$k\ge1$. The second statement follows from Theorem~\ref{thm:G}(c).
\end{proof}

\section{The weight formula}

\label{sec:wf}

\subsection{Weight functionals}

Let $S=S_0\oplus S_1$ be a binary Lie superalgebra over $R$ (not necessarily
simple and not necessarily free as an $R$-module). An $R$-linear map $%
g\colon N\to R$ is surjective if and only if $g(v)=1$ for some $v\in N$; for 
$R=\mathbb{C}$ this means $g\neq0$.

\begin{definition}
\label{def:wf} A \emph{weight functional} of $S$ is a pair of surjective $R$%
-linear maps $\lambda\colon S_0\to R$ and $\varphi\colon S_1\to R$ such that 
\begin{equation}  \label{eq:wf}
\varphi(ax)=\lambda(a)\varphi(x)\qquad\text{and}\qquad\lambda(ab)=0\qquad
(a,b\in S_0,\ x\in S_1).
\end{equation}
\end{definition}

Thus, for a complex binary Lie superalgebra $B=A\oplus M$, a weight
functional is a pair of non-zero linear maps $\lambda\colon A\to\mathbb{C}$
and $\varphi\colon M\to\mathbb{C}$ satisfying~\eqref{eq:wf}. For $B_2$ the
pair $\lambda(e)=1$, $\varphi(x)=1$ is a weight functional.

\begin{proposition}
\label{prop:exist} Let $B=A\oplus M$ be a complex finite-dimensional simple
binary Lie superalgebra with $M\neq0$ and solvable even part $A$. Then $B$
has a weight functional.
\end{proposition}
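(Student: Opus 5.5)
The plan is to obtain $\varphi$ from a one-dimensional quotient of the $A$-module $M$, take $\lambda$ to be the character by which $A$ acts on that quotient, and then use simplicity of $B$ to show $\lambda\neq0$.

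\emph{Constructing $\varphi$ and $\lambda$.} By Lemma~\ref{lem:module}, $A$ is a binary Lie algebra and $M$ is a binary Lie $A$-module via $(a,x)\mapsto ax$. Since $M\neq0$ is finite-dimensional, we may choose a maximal proper $A$-submodule $U\subsetneq M$. By the remark preceding Lemma~\ref{lem:module}, $M/U$ is again a binary Lie $A$-module, and it is irreducible by maximality of $U$. Since $A$ is solvable, Lemma~\ref{lem:normal} shows that $M/U$ is one-dimensional. Fix an isomorphism $M/U\cong\mathbb{C}$ and let $\varphi\colon M\to\mathbb{C}$ be the composite of the projection with it; then $\varphi\neq0$. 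The action of $A$ on the one-dimensional space $M/U$ is by scalars, so there is a linear map $\lambda\colon A\to\mathbb{C}$ with $ax\equiv\lambda(a)x \pmod U$. This is exactly $\varphi(ax)=\lambda(a)\varphi(x)$.

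\emph{Showing $\lambda(A^2)=0$.} By Theorem~\ref{thm:G}(a), applied to the solvable $A$ and the finite-dimensional module $M/U$, every element of $A^2$ acts nilpotently on $M/U$. A nilpotent scalar on a one-dimensional space is $0$, so $\lambda(ab)=0$ for all $a,b\in A$.

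\emph{Showing $\lambda\neq0$.} This is the only step that uses simplicity, and I expect it to be the main point. Suppose $\lambda=0$; then $AM\subseteq U$. The square $B^2$ is a graded ideal of $B$, since $B\cdot B^2\subseteq B^2$ and $B^2$ is spanned by products of homogeneous elements. It is non-zero by simplicity, so $B^2=B$. By \eqref{eq:rules}, the odd part of $B^2$ is $AM+MA=AM$. Hence $M=AM\subseteq U$, contradicting $U\neq M$. Therefore $\lambda\neq0$, and $(\lambda,\varphi)$ is a weight functional of $B$.
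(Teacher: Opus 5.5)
Your proof is correct and follows the paper's argument: the same one-dimensional quotient $M/U$, and the same appeals to Lemma~\ref{lem:normal} and Theorem~\ref{thm:G}(a). The only deviation is in ruling out $\lambda=0$. The paper exhibits the proper graded ideal $A\oplus\ker\varphi$ via Lemma~\ref{lem:ideal}, whereas you use $B=B^2$ to get $M=AM\subseteq U$; since $B^2\subseteq A\oplus\ker\varphi$ when $\lambda=0$, this is the same observation made slightly more directly.
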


\begin{proof}
By Lemma~\ref{lem:module}, $M$ is a binary Lie $A$-module. Let $W$ be a proper submodule of $M$ of
maximal dimension. Then $M/W$ is irreducible, hence $M/W=\C\bar w$ is one-dimensional by
Lemma~\ref{lem:normal}. Define $\lam$ and $\vp$ by $a\bar w=\lam(a)\bar w$ and
$x+W=\vp(x)\bar w$. Then $\ker\vp=W\neq M$ and $\vp(ax)=\lam(a)\vp(x)$. An element $e\in A^2$
acts on $M/W$ nilpotently by Theorem~\ref{thm:G}(a) and as the scalar $\lam(e)$; hence
$\lam(A^2)=0$.

It remains to show that $\lam\neq0$. Suppose $\lam=0$ and put $H=\ker\vp$. Then $AM\subseteq H$,
and Lemma~\ref{lem:ideal} shows that $A\oplus H$ is a graded ideal of $B$
($AA\subseteq A$, $AH\subseteq H$, $AM\subseteq H$, $MH\subseteq A$). It is proper, since
$\vp\neq0$, so $A\oplus H=0$ by simplicity. Then $B=M$ and $B^2\subseteq A=0$, a contradiction.
\end{proof}

\subsection{The weight formula over a commutative ring}

In this subsection $S=S_0\oplus S_1$ is a binary Lie superalgebra over $R$
(not necessarily simple and not necessarily free as an $R$-module). Given a
weight functional $(\lambda,\varphi)$ of $S$ and an element $u\in S_1$ with $%
\varphi(u)=1$, we put, for $a\in S_0$, 
\begin{equation*}
\sigma(a)=\lambda\bigl((au)u\bigr)-\lambda(a)\lambda(uu),\qquad
h_a=au-\lambda(a)u,\qquad H=\ker\varphi . 
\end{equation*}
By~\eqref{eq:rules}, $\lambda(xy)=\lambda(yx)$ for $x,y\in S_1$, and $\sigma$
is $R$-linear. Every $x\in S_1$ decomposes as $x=\varphi(x)u+\bigl(%
x-\varphi(x)u\bigr)$ with $x-\varphi(x)u\in H$, so $S_1=Ru\oplus H$.
Moreover $S_0H\subseteq H$, $h_a\in H$ and $\sigma(a)=\lambda({h_a}u)$.

\begin{lemma}
\label{lem:scalar} Let $(\lambda,\varphi)$ be a weight functional of $S$.
Then for all $a,b\in S_0$ and $y,z,t\in S_1$ 
\begin{gather}
\Bigl(\lambda\bigl((ay)z\bigr)-\lambda\bigl((az)y\bigr)\Bigr)%
\varphi(t)+\lambda\bigl((ay)t\bigr)\varphi(z)-\lambda\bigl((az)t\bigr)%
\varphi(y) +\lambda(a)\bigl(\lambda(zt)\varphi(y)-\lambda(yt)\varphi(z)\bigr)%
=0,  \label{eq:phi-I} \\
\lambda\Bigl(\bigl((ab)z\bigr)t\Bigr)-\lambda\Bigl(\bigl(b(az)\bigr)t\Bigr)%
+\lambda\Bigl(\bigl((ab)t\bigr)z\Bigr)+\lambda\Bigl(\bigl(a(bt)\bigr)z\Bigr)%
=0.  \label{eq:lam-half}
\end{gather}
\end{lemma}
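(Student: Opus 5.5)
The plan is to apply $\lambda$ or $\varphi$ to suitable instances of the identity $\SBL{x,y,z,t}=0$ from \eqref{eq:SBL} and simplify with \eqref{eq:wf} and \eqref{eq:rules}. The key simplification is this. Whenever the outermost product of a term is an even element times an odd element, $\varphi$ of it factors as $\lambda(\text{even factor})\varphi(\text{odd factor})$. Whenever the outermost product is even times even, $\lambda$ of it vanishes because $\lambda(S_0S_0)=0$.

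For \eqref{eq:phi-I}, I take the instance $\SBL{a,y,z,t}$ with $a\in S_0$ and $y,z,t\in S_1$. It is an odd identity, so I apply $\varphi$. The signs are $(-1)^{\bar a\bar y}=1$ and $(-1)^{\bar z\bar t}=-1$. The six terms have outer products of the following types:
\begin{itemize}
\item $((ay)z)t$ is even times odd, so $\varphi$ gives $\lambda((ay)z)\varphi(t)$;
\item $a(y(zt))$ gives $\lambda(a)\lambda(y(zt))\varphi$-free terms, handled below;
\item $y((az)t)$ and $y(a(zt))$ are odd times even;
\item $(y(az))t$ is even times odd;
\item $a((yt)z)$ and $((ay)t)z$ are of the same types.
\end{itemize}
Each term is rewritten with the even factor first using $xa=-ax$. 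This gives expressions such as $\varphi(y((az)t))=-\lambda((az)t)\varphi(y)$ and $\varphi(y(a(zt)))=-\lambda(a(zt))\varphi(y)=0$, since $a(zt)\in S_0S_0$. Similarly, $\varphi(a(y(zt)))=\lambda(a)\varphi(y(zt))=-\lambda(a)\lambda(zt)\varphi(y)$. Collecting terms, and using $\lambda(xy)=\lambda(yx)$, should produce exactly \eqref{eq:phi-I} up to an overall sign. The term $\lambda(a)\lambda(yt)\varphi(z)$ comes from $a((yt)z)$, and the term $\lambda((ay)t)\varphi(z)$ comes from $((ay)t)z$.

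For \eqref{eq:lam-half}, I take the instance $\SBL{a,b,z,t}$ with $a,b$ even and $z,t$ odd. It is even, so I apply $\lambda$. Terms of the form $\lambda(a(\cdot))$ and $\lambda(b(\cdot))$ with an even inner argument vanish; these are $a(b(zt))$, $b((az)t)$, $b(a(zt))$ and $a((bt)z)$. The surviving terms are $((ab)z)t$, $(b(az))t$, $((ab)t)z$ and $(a(bt))z$. With the signs $(-1)^{\bar a\bar b}=1$ and $(-1)^{\bar z\bar t}=-1$, these give $\lambda(((ab)z)t)-\lambda((b(az))t)+\lambda(((ab)t)z)+\lambda((a(bt))z)=0$, which is \eqref{eq:lam-half}.

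The main obstacle is the sign bookkeeping in the first identity. The convention of writing the even factor first introduces minus signs, and these must be tracked carefully. One also has to choose the argument order in $\mathrm{SBL}$ so that the result matches \eqref{eq:phi-I} exactly. If the order $(a,y,z,t)$ does not match, I would try a permutation of the arguments or combine two instances.
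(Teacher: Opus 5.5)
Your proposal is correct and follows the paper's proof: apply $\varphi$ to $\SBL{a,y,z,t}=0$ and $\lambda$ to $\SBL{a,b,z,t}=0$, rewrite each term with \eqref{eq:rules}, and kill the $S_0S_0$-terms using \eqref{eq:wf}. Two small corrections: the identity has eight terms, not six, and the one you did not list, $(a(yt))z$, vanishes under $\varphi$ just like $y(a(zt))$. Also, the order $(a,y,z,t)$ already yields \eqref{eq:phi-I} exactly, with no overall sign change, so no permutation or second instance is needed.
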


\begin{proof}
By~\eqref{eq:rules}, the identities $\SBL{a,y,z,t}=0$ and $\SBL{a,b,z,t}=0$ can be written as
\begin{gather*}
\bigl((ay)z\bigr)t+a\bigl((zt)y\bigr)-\bigl((az)t\bigr)y-\bigl(a(zt)\bigr)y-\bigl((az)y\bigr)t-a\bigl((yt)z\bigr)+\bigl((ay)t\bigr)z+\bigl(a(yt)\bigr)z=0,\\
\bigl((ab)z\bigr)t-a\bigl(b(zt)\bigr)+b\bigl((az)t\bigr)+b\bigl(a(zt)\bigr)-\bigl(b(az)\bigr)t-a\bigl((bt)z\bigr)+\bigl((ab)t\bigr)z+\bigl(a(bt)\bigr)z=0.
\end{gather*}
Recall that $\vp(em)=\lam(e)\vp(m)$, $mm'=m'm$ and $\lam(S_0S_0)=0$ for $e\in S_0$
and $m,m'\in S_1$. Applying $\vp$ to the first identity, the eight terms give, in order,
\begin{multline*}
\lam\bigl((ay)z\bigr)\vp(t),\ \ 
\lam(a)\lam(zt)\vp(y),\ \ 
-\lam\bigl((az)t\bigr)\vp(y),\ \ 
0,\\
-\lam\bigl((az)y\bigr)\vp(t),\ \ -\lam(a)\lam(yt)\vp(z),\ \ \lam\bigl((ay)t\bigr)\vp(z),\ \ 0,
\end{multline*}
\noindent and their sum is~\eqref{eq:phi-I}. Applying $\lam$ to the second identity, the second, third,
fourth and sixth terms lie in $S_0S_0$ and vanish, and the remaining four terms
give~\eqref{eq:lam-half}.
\end{proof}

\begin{lemma}
\label{lem:basic2} Let $(\lambda,\varphi)$ be a weight functional of $S$ and 
$\varphi(u)=1$. Then for all $a,b\in S_0$, $p,q\in H$ and $z,t\in S_1$ 
\begin{gather}
\lambda\bigl((ap)q\bigr)=\lambda(a)\lambda(pq),\qquad 2\lambda\bigl((ap)u%
\bigr)=\lambda\bigl((au)p\bigr)+\lambda(a)\lambda(pu),  \label{eq:basic} \\
\lambda\Bigl(\bigl(a(bz)+b(az)\bigr)t\Bigr)=\lambda\Bigl(z\bigl(a(bt)+b(at)%
\bigr)\Bigr),  \label{eq:lam-I} \\
\lambda({h_a}p)=0.  \label{eq:hp}
\end{gather}
\end{lemma}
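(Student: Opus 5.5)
The plan is to derive all four identities from the two scalar identities of Lemma~\ref{lem:scalar}, specializing the odd arguments to $u$ and to elements of $H$. Throughout we use $\varphi(H)=0$, $\varphi(u)=1$, $S_0H\subseteq H$, and the symmetry $\lambda(xy)=\lambda(yx)$ for $x,y\in S_1$.

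\textbf{The identities \eqref{eq:basic}.} For the first one, take $y=p$, $z=q$ and $t=u$ in \eqref{eq:phi-I}. The terms carrying $\varphi(z)$ and $\varphi(y)$ vanish, and we are left with
\[
\lambda\bigl((ap)q\bigr)=\lambda\bigl((aq)p\bigr).
\]
This symmetry alone does not give the claim, so we then substitute $y=p$, $z=u$, $t=q$. Here $\varphi(t)=\varphi(y)=0$ and $\varphi(z)=1$, which leaves
\[
\lambda\bigl((ap)q\bigr)-\lambda(a)\lambda(pq)=0.
\]
This is exactly the first identity. For the second, take $y=p$ and $z=t=u$. This gives
\[
\lambda\bigl((ap)u\bigr)-\lambda\bigl((au)p\bigr)+\lambda\bigl((ap)u\bigr)-\lambda(a)\lambda(pu)=0,
\]
which is the second identity of \eqref{eq:basic}.

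\textbf{The identity \eqref{eq:lam-I}.} We start from \eqref{eq:lam-half} and interchange $a$ and $b$. Adding the two versions, the terms $\lambda\bigl(((ab)z)t\bigr)$ and $\lambda\bigl(((ba)z)t\bigr)$ cancel by anticommutativity, and likewise for the terms $((ab)t)z$. What remains is
\[
-\lambda\Bigl(\bigl(b(az)+a(bz)\bigr)t\Bigr)+\lambda\Bigl(\bigl(a(bt)+b(at)\bigr)z\Bigr)=0 .
\]
Using $\lambda(xy)=\lambda(yx)$ for odd $x,y$, this is \eqref{eq:lam-I}.

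\textbf{The identity \eqref{eq:hp}.} Expanding the definition of $h_a$,
\[
\lambda(h_ap)=\lambda\bigl((au)p\bigr)-\lambda(a)\lambda(up).
\]
To evaluate $\lambda\bigl((au)p\bigr)$, take $y=u$, $z=p$, $t=u$ in \eqref{eq:phi-I}. This gives
\[
\lambda\bigl((au)p\bigr)-\lambda\bigl((ap)u\bigr)+0-\lambda\bigl((ap)u\bigr)+\lambda(a)\lambda(pu)=0,
\]
that is, $\lambda\bigl((au)p\bigr)=2\lambda\bigl((ap)u\bigr)-\lambda(a)\lambda(pu)$. This is the same relation as the second identity of \eqref{eq:basic}, so on its own it does not give \eqref{eq:hp}.

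\textbf{Main obstacle.} The difficulty is to find a second, independent relation between $\lambda\bigl((au)p\bigr)$ and $\lambda\bigl((ap)u\bigr)$. The natural source is \eqref{eq:lam-I} or \eqref{eq:lam-half} with the even arguments replaced by products of odd elements, for example $b=uu$ or $b=pu$, since the weight conditions then let $\lambda$ evaluate the resulting expressions.

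As a first attempt, we would rather apply $\lambda$ directly to $\mathrm{SBL}(u,u,a,p)$, or to a similar instance with two odd copies of $u$. Such identities mix terms like $\lambda\bigl((ap)u\bigr)$ and $\lambda\bigl((au)p\bigr)$ with different coefficients. They also produce terms such as $\lambda\bigl(a(\cdot)\bigr)$, which vanish because $\lambda(S_0S_0)=0$. Combining the resulting relation with the one above should force
\[
\lambda\bigl((au)p\bigr)=\lambda(a)\lambda(up),
\]
which is precisely $\lambda(h_ap)=0$. The sign bookkeeping in that odd instance of \eqref{eq:SBL} is the main technical risk of the argument.
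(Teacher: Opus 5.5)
Your derivations of \eqref{eq:basic} and \eqref{eq:lam-I} are correct and agree with the paper; your substitutions in \eqref{eq:phi-I} differ slightly from the paper's, which is immaterial. The gap is \eqref{eq:hp}: it is not proved, and the route you sketch cannot prove it. Since $\mathrm{SBL}(u,u,a,p)$ has three odd arguments, it lies in $S_1$ and can only be evaluated by $\varphi$, not by $\lambda$. Doing so returns exactly $2\lambda\bigl((ap)u\bigr)=\lambda\bigl((au)p\bigr)+\lambda(a)\lambda(pu)$ once more. In fact, putting the even argument in any of the four slots of \eqref{eq:SBL} and applying $\varphi$ reproduces \eqref{eq:phi-I}. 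Write $D_a(x,y)=\lambda\bigl((ax)y\bigr)-\lambda(a)\lambda(xy)$. Then \eqref{eq:phi-I} becomes $\bigl(D_a(y,z)-D_a(z,y)\bigr)\varphi(t)+D_a(y,t)\varphi(z)-D_a(z,t)\varphi(y)=0$. This is satisfied by $D_a(H,H)=0$, $D_a(u,u)$ arbitrary, and $D_a(u,p)=2D_a(p,u)=\mu(p)$ with an \emph{arbitrary} linear form $\mu$ on $H$. Since $\lambda(h_ap)=D_a(u,p)$, the degree-four identities with a single even argument can never force \eqref{eq:hp}; you need a relation linking two even elements.

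The missing step is \eqref{eq:lam-I} itself, with $a,b\in S_0$ arbitrary and the odd slots specialized to $z=p$, $t=u$. Put $\ell=\lambda(pu)$ and $\varepsilon(a)=\lambda(ph_a)$. Because $bp,h_a,h_b\in H$, every term can be computed from \eqref{eq:basic}:
\begin{itemize}
\item $2\lambda\bigl((bp)u\bigr)=2\lambda(b)\ell+\varepsilon(b)$;
\item the second identity of \eqref{eq:basic} applied to $bp\in H$, together with $\lambda\bigl((bp)h_a\bigr)=\lambda(b)\varepsilon(a)$ from the first one, gives $2\lambda\bigl((a(bp))u\bigr)=2\lambda(a)\lambda(b)\ell+\lambda(a)\varepsilon(b)+\lambda(b)\varepsilon(a)$, which is symmetric in $a,b$;
\item writing $a(bu)=\lambda(b)au+ah_b$ and using the first identity for $\lambda\bigl((ah_b)p\bigr)$ gives $\lambda\bigl(p(a(bu))\bigr)=\lambda(a)\lambda(b)\ell+\lambda(a)\varepsilon(b)+\lambda(b)\varepsilon(a)$.
\end{itemize}
Comparing the two sides of \eqref{eq:lam-I} yields $\lambda(a)\varepsilon(b)+\lambda(b)\varepsilon(a)=0$ for all $a,b\in S_0$. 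Now choose $h$ with $\lambda(h)=1$; this is where the surjectivity of $\lambda$ is essential. Taking $a=b=h$ gives $2\varepsilon(h)=0$, and then $a=h$ gives $\varepsilon=0$, which is \eqref{eq:hp}. Keeping $b$ general matters: specializing to $b=uu$, as you suggest, only yields $\lambda(a)\varepsilon(uu)+\lambda(uu)\varepsilon(a)=0$. That gives no information about $\varepsilon(a)$ when $\lambda(uu)=0$.
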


\begin{proof}
The identities~\eqref{eq:basic} are~\eqref{eq:phi-I} with $(y,z,t)=(u,p,q)$ and
$(y,z,t)=(u,p,u)$. Interchanging $a$ and $b$ in~\eqref{eq:lam-half}, using $ba=-ab$, adding the
result to~\eqref{eq:lam-half} and using $\lam(xy)=\lam(yx)$ for $x,y\in S_1$, we get~\eqref{eq:lam-I}.

To prove~\eqref{eq:hp}, fix $p\in H$ and put $\ell=\lam(pu)$ and $\varepsilon(a)=\lam(p{h_a})$.
Since $au=\lam(a)u+h_a$, we have $\lam\bigl((au)p\bigr)=\lam(a)\ell+\varepsilon(a)$. We compute both sides
of~\eqref{eq:lam-I} for $z=p$, $t=u$. By~\eqref{eq:basic},
$2\lam\bigl((bp)u\bigr)=2\lam(b)\ell+\varepsilon(b)$. Since $bp,h_a\in H$, \eqref{eq:basic} applied to $bp$
gives
\[
2\lam\Bigl(\bigl(a(bp)\bigr)u\Bigr)=2\lam(a)\lam\bigl((bp)u\bigr)+\lam\bigl((bp){h_a}\bigr)=2\lam(a)\lam(b)\ell+\lam(a)\varepsilon(b)+\lam(b)\varepsilon(a).
\]
This expression is symmetric in $a,b$, so the left side of~\eqref{eq:lam-I} equals
\begin{center}
$2\lam(a)\lam(b)\ell+\lam(a)\varepsilon(b)+\lam(b)\varepsilon(a)$.
\end{center}  On the other hand,
\begin{center}$a(bu)=\lam(b)au+ah_b$, and
$\lam\Bigl(p\bigl(a(bu)\bigr)\Bigr)=\lam(a)\lam(b)\ell+\lam(b)\varepsilon(a)+\lam(a)\varepsilon(b)$,\end{center} so the right side
of~\eqref{eq:lam-I} equals
$2\lam(a)\lam(b)\ell+2\lam(a)\varepsilon(b)+2\lam(b)\varepsilon(a)$. Hence
\begin{center}
$\lam(a)\varepsilon(b)+\lam(b)\varepsilon(a)=0$ for all $a,b\in S_0$.\end{center} Choose $h\in S_0$ with
$\lam(h)=1$. For $a=b=h$ we get $2\varepsilon(h)=0$, so $\varepsilon(h)=0$; then $a=h$ gives
$\varepsilon(b)=0$ for all $b$.
\end{proof}

\begin{theorem}[The weight formula]
\label{thm:WF} Let $(\lambda,\varphi)$ be a weight functional of a binary
Lie superalgebra $S$ over $R$, and let $\varphi(u)=1$. Then 
\begin{equation}  \label{eq:WF}
\lambda\bigl((ax)y\bigr)=\lambda(a)\lambda(xy)+\sigma(a)\varphi(x)\varphi(y)%
\qquad (a\in S_0,\ x,y\in S_1)
\end{equation}
and $\sigma(S_0S_0)=0$. In particular, $\sigma$ does not depend on the
choice of $u$.
\end{theorem}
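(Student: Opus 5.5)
The proof of the formula is a bilinear expansion: decompose $x=\vp(x)u+p$ and $y=\vp(y)u+q$ with $p=x-\vp(x)u\in H$ and $q=y-\vp(y)u\in H$. By linearity, $\lam\bigl((ax)y\bigr)$ splits into four terms:
\[
\vp(x)\vp(y)\lam\bigl((au)u\bigr)+\vp(x)\lam\bigl((au)q\bigr)+\vp(y)\lam\bigl((ap)u\bigr)+\lam\bigl((ap)q\bigr).
\]
We treat them one at a time.

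The first term equals $\vp(x)\vp(y)\bigl(\lam(a)\lam(uu)+\sigma(a)\bigr)$ by the definition of $\sigma$. By the first identity of~\eqref{eq:basic}, the last term is $\lam(a)\lam(pq)$. For the second term we write $au=\lam(a)u+h_a$ and get $\lam\bigl((au)q\bigr)=\lam(a)\lam(uq)+\lam(h_aq)=\lam(a)\lam(uq)$ by~\eqref{eq:hp}. For the third term, the second identity of~\eqref{eq:basic} combined with the computation just made gives $2\lam\bigl((ap)u\bigr)=\lam\bigl((au)p\bigr)+\lam(a)\lam(pu)=2\lam(a)\lam(pu)$. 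Since $2$ is invertible in $R$, $\lam\bigl((ap)u\bigr)=\lam(a)\lam(pu)$.

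Summing the four terms, the right-hand side becomes $\sigma(a)\vp(x)\vp(y)$ plus $\lam(a)$ times the bilinear expansion of $\lam(xy)$. Here we use that $\lam(pu)=\lam(up)$ by~\eqref{eq:rules}. This yields~\eqref{eq:WF}.

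Independence of $\sigma$ from $u$ follows at once. If $u'$ is another element with $\vp(u')=1$, then~\eqref{eq:WF} with $x=y=u'$ gives $\sigma(a)=\lam\bigl((au')u'\bigr)-\lam(a)\lam(u'u')$, which is exactly the $\sigma$ defined via $u'$.

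The remaining claim, and the step I expect to be the main obstacle, is $\sigma(S_0S_0)=0$. My plan is to substitute~\eqref{eq:WF} into~\eqref{eq:lam-half} with $z=t=u$. The weight formula evaluates $\lam\bigl((cu)u\bigr)$ for any even $c$, and $\lam(ab)=0$, so the first and third terms of~\eqref{eq:lam-half} together contribute $2\sigma(ab)$.

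The term $\lam\bigl((b(au))u\bigr)$ is handled by applying~\eqref{eq:WF} with the even element $b$ and odd elements $au$ and $u$. This gives $\lam(b)\lam\bigl((au)u\bigr)+\sigma(b)\vp(au)$, and using $\vp(au)=\lam(a)$ it equals
\[
\lam(b)\bigl(\lam(a)\lam(uu)+\sigma(a)\bigr)+\sigma(b)\lam(a).
\]
Similarly, $\lam\bigl((a(bu))u\bigr)=\lam(a)\bigl(\lam(b)\lam(uu)+\sigma(b)\bigr)+\sigma(a)\lam(b)$. This expression is symmetric in $a$ and $b$, so the two middle terms cancel, because they enter~\eqref{eq:lam-half} with opposite signs. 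Hence $2\sigma(ab)=0$, so $\sigma(ab)=0$.

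The sign bookkeeping here needs checking. If the middle terms do not cancel, I would instead apply~\eqref{eq:lam-I}, or swap $a$ and $b$ and subtract, to isolate $\sigma(ab)$.
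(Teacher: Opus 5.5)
Your proposal is correct and follows the paper's proof essentially step for step. It uses the same expansion along $S_1=Ru\oplus H$ via \eqref{eq:basic} and \eqref{eq:hp}, the same independence argument, and the same substitution $z=t=u$ in \eqref{eq:lam-half}, exploiting that $\lambda\bigl((b(au))u\bigr)$ is symmetric in $a,b$.

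The sign check you flagged does go through. With $z=t=u$, the two middle terms of \eqref{eq:lam-half} are $-\lambda\bigl((b(au))u\bigr)$ and $+\lambda\bigl((a(bu))u\bigr)$, so they cancel and no fallback is needed.
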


\begin{proof}
Let $a\in S_0$ and $p,q\in H$. By~\eqref{eq:basic}, $\lam\bigl((ap)q\bigr)=\lam(a)\lam(pq)$. Since
$au=\lam(a)u+h_a$, \eqref{eq:hp} gives $\lam\bigl((au)q\bigr)=\lam(a)\lam(uq)$, and then the second identity
in~\eqref{eq:basic} gives $$\lam\bigl((ap)u\bigr)=\lam(a)\lam(pu).$$ Finally,
$\lam\bigl((au)u\bigr)=\lam(a)\lam(uu)+\sigma(a)$. Writing $x=\alpha u+p$, $y=\beta u+q$ with
$\alpha=\vp(x)$, $\beta=\vp(y)$ and $p,q\in H$, and expanding $\lam\bigl((ax)y\bigr)$ by bilinearity, we
obtain~\eqref{eq:WF}.

Let $a,b\in S_0$. Putting $z=t=u$ in~\eqref{eq:lam-half} we get
$$2\lam\Bigl(\bigl((ab)u\bigr)u\Bigr)-\lam\Bigl(\bigl(b(au)\bigr)u\Bigr)+\lam\Bigl(\bigl(a(bu)\bigr)u\Bigr)=0.$$ Since $\lam(ab)=0$, \eqref{eq:WF} gives
$\lam\Bigl(\bigl((ab)u\bigr)u\Bigr)=\sigma(ab)$; since $\vp(au)=\lam(a)$, it also gives
$$\lam\Bigl(\bigl(b(au)\bigr)u\Bigr)=\lam(a)\lam(b)\lam(uu)+\lam(b)\sigma(a)+\lam(a)\sigma(b),$$ which is symmetric in
$a,b$. Hence $2\sigma(ab)=0$ and $\sigma(ab)=0$. If $\vp(u')=1$, then~\eqref{eq:WF} with
$x=y=u'$ gives $\sigma(a)=\lam\bigl((au')u'\bigr)-\lam(a)\lam(u'u')$, so $\sigma$ does not depend on $u$.
\end{proof}

\subsection{The functional \texorpdfstring{$\sigma$}{sigma} vanishes}

In this subsection $B=A\oplus M$ is a complex finite-dimensional binary Lie
superalgebra (not necessarily simple) with a weight functional $%
(\lambda,\varphi)$.

We use the power series ring $\mathbb{C}\llbracket s\rrbracket$ and its
quotients $\mathcal{O}_k=\mathbb{C}\llbracket s\rrbracket/(s^k)$, $k\ge1$.
For $f\in\mathbb{C}\llbracket s\rrbracket$, $f(0)$ is the constant term, $%
\bar f$ is the image of $f$ in $\mathcal{O}_k$ and $\partial f$ is the
formal derivative; $f\equiv g\pmod{s^k}$ means $f-g\in s^k\mathbb{C}%
\llbracket s\rrbracket$. We use the standard facts: $f$ is invertible in $%
\mathbb{C}\llbracket s\rrbracket$ (or in $\mathcal{O}_k$) if and only if $%
f(0)\neq0$; $\mathbb{C}\llbracket s\rrbracket$ is an integral domain; if $%
f(0)=0$ and $\partial f\in s^k\mathbb{C}\llbracket s\rrbracket$, then $f\in
s^{k+1}\mathbb{C}\llbracket s\rrbracket$; and $\bigcap_k s^k\mathbb{C}%
\llbracket s\rrbracket=0$.

Fix $u\in M$ with $\varphi(u)=1$ and let $\Theta\colon M\to M$, $%
\Theta(z)=(uz)u$. Define $\mathbb{C}$-linear maps $\Psi,\Phi\colon M\to%
\mathbb{C}\llbracket s\rrbracket$ and $\Lambda\colon A\to\mathbb{C}%
\llbracket s\rrbracket$ by 
\begin{equation*}
\Psi(z)=\sum_{j\ge0}\varphi\bigl(\Theta^j(z)\bigr)\,\frac{s^j}{j!},\qquad
\Phi(z)=\Psi(u)^{-1}\Psi(z),\qquad \Lambda(a)=\Phi(au), 
\end{equation*}
so that $\Phi$ is ``$\varphi\circ e^{s\Theta}$ normalized at $u$''; $\Psi(u)$
is invertible since its constant term is $\varphi(u)=1$, and $\Phi(u)=1$.
For $a\in A$ put 
\begin{equation*}
\tau(a)=\Lambda\bigl((au)u\bigr)-\Lambda(a)\Lambda(uu). 
\end{equation*}
Note that $\Lambda(xy)=\Lambda(yx)$ for $x,y\in M$. We will show that the
pair $(\Lambda,\Phi)$ satisfies the identities~\eqref{eq:wf}, that is, $%
\Phi(az)=\Lambda(a)\Phi(z)$ and $\Lambda(ab)=0$ for all $a,b\in A$ and $z\in
M$.

\begin{lemma}
\label{lem:Psi} For all $a,b\in A$ and $x,y,z\in M$:

\begin{enumerate}
\item 
\lbrack \textrm{(a)] }

\item \textrm{$\Phi(z)(0)=\varphi(z)$, $\Lambda(a)(0)=\lambda(a)$, $%
\Lambda(xy)(0)=\lambda(xy)$ and $\tau(a)(0)=\sigma(a)$; }

\item \textrm{$\partial\Phi(z)=\Lambda(uz)-\Phi(z)\Lambda(uu)$; }

\item \textrm{$\partial\Lambda(a)=\tau(a)$ and $\partial\Lambda(ab)=\tau(ab)$%
; }

\item \textrm{$\partial\bigl(\Phi(az)-\Lambda(a)\Phi(z)\bigr)=\Lambda\bigl(%
(az)u\bigr)-\Lambda(a)\Lambda(zu)-\tau(a)\Phi(z)-\Lambda(uu)\bigl(%
\Phi(az)-\Lambda(a)\Phi(z)\bigr)$. }
\end{enumerate}
\end{lemma}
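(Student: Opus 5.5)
The plan is to derive everything from a single differentiation rule for $\Psi$ together with the definitions; no identity of binary Lie superalgebras is needed beyond the rules~\eqref{eq:rules}. Differentiating the defining series term by term gives
\[
\partial\Psi(z)=\sum_{j\ge1}\varphi\bigl(\Theta^{j}(z)\bigr)\frac{s^{j-1}}{(j-1)!}=\Psi\bigl(\Theta(z)\bigr),
\]
which is legitimate because all operations are formal and $\Theta$ is $\mathbb{C}$-linear. From this I will get (b), and then (c) and (d) are formal consequences of (b) and the Leibniz rule.

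For (a) I will compare constant terms. Since $\Psi(u)(0)=\varphi(u)=1$, we get $\Phi(z)(0)=\varphi(z)$. Then $\Lambda(a)(0)=\varphi(au)=\lambda(a)\varphi(u)=\lambda(a)$ by~\eqref{eq:wf}. Because $xy\in A$, the same computation gives $\Lambda(xy)(0)=\lambda(xy)$; note that $(au)u\in A$ as well. Taking constant terms in the definition of $\tau$ then gives $\tau(a)(0)=\lambda\bigl((au)u\bigr)-\lambda(a)\lambda(uu)=\sigma(a)$.

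For (b) I will apply the quotient rule to $\Phi(z)=\Psi(u)^{-1}\Psi(z)$, which gives
\[
\partial\Phi(z)=\Phi\bigl(\Theta(z)\bigr)-\Phi\bigl(\Theta(u)\bigr)\Phi(z).
\]
Since $uz\in A$, the definition of $\Lambda$ gives $\Phi\bigl((uz)u\bigr)=\Lambda(uz)$, and in the same way $\Phi\bigl((uu)u\bigr)=\Lambda(uu)$. This is exactly (b).

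For (c) I will apply (b) to $z=au$. This gives $\partial\Lambda(a)=\Lambda\bigl(u(au)\bigr)-\Lambda(a)\Lambda(uu)$. Since odd elements commute by~\eqref{eq:rules}, $u(au)=(au)u$, and the right side is $\tau(a)$. The statement for $ab$ is the same computation with $a$ replaced by $ab\in A$.

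For (d) I will apply (b) to $az$, which gives $\partial\Phi(az)=\Lambda\bigl(u(az)\bigr)-\Phi(az)\Lambda(uu)$. I will then expand $\partial\bigl(\Lambda(a)\Phi(z)\bigr)$ by the Leibniz rule using (c) and (b):
\[
\partial\bigl(\Lambda(a)\Phi(z)\bigr)=\tau(a)\Phi(z)+\Lambda(a)\bigl(\Lambda(uz)-\Phi(z)\Lambda(uu)\bigr).
\]
Subtracting these two expressions, and using $u(az)=(az)u$ and $uz=zu$, gives the stated formula after grouping the $\Lambda(uu)$ terms. There is no real obstacle in any of these steps. The only point needing care is the bookkeeping that each argument of $\Lambda$ (namely $uz$, $uu$, $(au)u$, $ab$) actually lies in $A$, and the consistent use of the symmetry of the odd–odd product.
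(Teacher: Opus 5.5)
Your proposal is correct and follows the paper's proof essentially step for step. You derive $\partial\Psi(z)=\Psi\bigl(\Theta(z)\bigr)$, read off (b) from it, and obtain (c) and (d) by specialization and the Leibniz rule. The only cosmetic difference is that you apply the quotient rule to $\Psi(u)^{-1}\Psi(z)$, whereas the paper differentiates $\Psi(z)=\Psi(u)\Phi(z)$ and divides by $\Psi(u)$; these are the same computation.
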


\begin{proof}
(a) The constant term of $\Psi(z)$ is $\vp(z)$; taking constant terms in
$\Psi(z)=\Psi(u)\Phi(z)$ gives $\Phi(z)(0)=\vp(z)$. Hence $\Lam(a)(0)=\vp(au)=\lam(a)$,
$\Lam(xy)(0)=\lam(xy)$ and $\tau(a)(0)=\sigma(a)$.

(b) We have $\partial\Psi(z)=\Psi\bigl(\Theta(z)\bigr)$. Differentiating $\Psi(z)=\Psi(u)\Phi(z)$ and dividing
by $\Psi(u)$, we get $\Phi\bigl(\Theta(z)\bigr)=\Phi\bigl(\Theta(u)\bigr)\Phi(z)+\partial\Phi(z)$. Since
$\Theta(w)=(uw)u$ with $uw\in A$, we have $\Phi\bigl(\Theta(w)\bigr)=\Lam(uw)$, and (b) follows.

(c) Apply (b) to $z=au$ and use $\Phi(au)=\Lam(a)$ and $u(au)=(au)u$. The second
formula follows since $ab\in A$.

(d) By (b), $\partial\Phi(az)=\Lam\bigl((az)u\bigr)-\Lam(uu)\Phi(az)$; by the product rule, (b) and (c),
$\partial\bigl(\Lam(a)\Phi(z)\bigr)=\tau(a)\Phi(z)+\Lam(a)\Lam(zu)-\Lam(uu)\Lam(a)\Phi(z)$. Subtract.
\end{proof}

\medskip \noindent\textbf{Extension of scalars.} Let $\mathcal{O}$ be a
commutative associative $\mathbb{C}$-algebra with identity. Then $B_\mathcal{%
O}=B\otimes_\mathbb{C}\mathcal{O}=A_\mathcal{O}\oplus M_\mathcal{O}$, with $%
A_\mathcal{O}=A\otimes_\mathbb{C}\mathcal{O}$, $M_\mathcal{O}=M\otimes_%
\mathbb{C}\mathcal{O}$, is a superalgebra over $\mathcal{O}$ with the
multiplication $(b\otimes f)(c\otimes g)=bc\otimes fg$. If $e_1,\dots,e_n$
is a homogeneous basis of $B$, then $B_\mathcal{O}$ is the free $\mathcal{O}$%
-module with the same basis and the same multiplication table. Both sides of~%
\eqref{eq:anti} and~\eqref{eq:SBL} are multilinear, and $\mathrm{SBL}%
(b_1\otimes f_1,\dots,b_4\otimes f_4)=\mathrm{SBL}(b_1,\dots,b_4)\otimes
f_1f_2f_3f_4$ since $\mathcal{O}$ is commutative. Hence we have the
following statement.

\begin{proposition}
\label{prop:BO} $B_\mathcal{O}$ is a binary Lie superalgebra over $\mathcal{O%
}$ with even part $A_\mathcal{O}$ and odd part $M_\mathcal{O}$.
\end{proposition}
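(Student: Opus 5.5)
The statement is essentially a base-change verification, and the paragraph preceding it already indicates the route. I would check three things in turn: that $B_\mathcal{O}$ is a superalgebra over $\mathcal{O}$ with the stated graded components, that it satisfies the super anticommutativity~\eqref{eq:anti}, and that it satisfies the identity~\eqref{eq:SBL}.

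For the grading, $B_\mathcal{O}=A_\mathcal{O}\oplus M_\mathcal{O}$ as $\mathcal{O}$-modules, since tensoring with $\mathcal{O}$ commutes with direct sums. The multiplication $(b\otimes f)(c\otimes g)=bc\otimes fg$ is well defined and $\mathcal{O}$-bilinear, because $\mathcal{O}$ is commutative. It satisfies $A_\mathcal{O}M_\mathcal{O}\subseteq M_\mathcal{O}$ and the other parity rules, since $B$ is graded. Equivalently, one may work with a homogeneous basis $e_1,\dots,e_n$, so that $B_\mathcal{O}$ is free on it with the same structure constants.

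The main point is that a homogeneous element of $B_\mathcal{O}$ is not a pure tensor but a finite sum $\sum_i b_i\otimes f_i$ with all $b_i$ of the same parity. Both sides of~\eqref{eq:anti} and~\eqref{eq:SBL} are $\mathcal{O}$-multilinear in their homogeneous arguments, because the signs depend only on parities, which are fixed. So it suffices to verify the identities on pure tensors $b\otimes f$ with $b$ homogeneous in $B$. For these we compute $(b\otimes f)(c\otimes g)=bc\otimes fg$ and $(c\otimes g)(b\otimes f)=cb\otimes gf$. Since $fg=gf$, anticommutativity follows from that of $B$. Expanding each of the nine bracketed terms of $\mathrm{SBL}$ gives the corresponding product in $B$ tensored with $f_1f_2f_3f_4$, and commutativity and associativity of $\mathcal{O}$ make this monomial independent of the bracketing and ordering. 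Hence
\[
\mathrm{SBL}(b_1\otimes f_1,\dots,b_4\otimes f_4)=\mathrm{SBL}(b_1,\dots,b_4)\otimes f_1f_2f_3f_4=0 .
\]

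There is no genuine obstacle here. The only point needing care is the reduction from arbitrary homogeneous elements to pure tensors, that is, noting that multilinearity holds with parity-dependent but fixed signs. This reduction is what lets Definition~\ref{def:SBL} apply to $B_\mathcal{O}$ over the ring $R=\mathcal{O}$, in which $2$ is invertible because $\mathcal{O}$ is a $\mathbb{C}$-algebra.
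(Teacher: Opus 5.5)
Your proposal is correct and is essentially the paper's argument: both identities are multilinear with signs fixed by the parities, so they reduce to pure tensors, where $\mathrm{SBL}(b_1\otimes f_1,\dots,b_4\otimes f_4)=\mathrm{SBL}(b_1,\dots,b_4)\otimes f_1f_2f_3f_4$ because $\mathcal{O}$ is commutative. (A trivial slip: $\mathrm{SBL}$ has eight terms, not nine.)
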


\begin{lemma}
\label{lem:trunc} Let $k\ge1$ and assume that $\Phi(az)\equiv\Lambda(a)%
\Phi(z)$ and $\Lambda(ab)\equiv0\pmod{s^k}$ for all $a,b\in A$ and $z\in M$.
Then for all $a,b\in A$ and $x,y\in M$ 
\begin{equation}  \label{eq:WF-k}
\Lambda\bigl((ax)y\bigr)\equiv\Lambda(a)\Lambda(xy)+\tau(a)\Phi(x)\Phi(y)%
\qquad\text{and}\qquad \tau(ab)\equiv0\pmod{s^k}.
\end{equation}
\end{lemma}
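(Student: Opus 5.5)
The idea is to reduce the lemma to Theorem~\ref{thm:WF} applied over the ring $\Or_k$. By Proposition~\ref{prop:BO}, $B_{\Or_k}=A_{\Or_k}\oplus M_{\Or_k}$ is a binary Lie superalgebra over $\Or_k$, and $2$ is invertible there.

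First I extend the reductions $\bar\Lam$ and $\bar\Phi$ of $\Lam,\Phi$ modulo $s^k$ $\Or_k$-linearly:
\[
\lam_k\colon A_{\Or_k}\to\Or_k,\qquad \lam_k(a\otimes f)=\bar\Lam(a)f,
\]
and $\vp_k\colon M_{\Or_k}\to\Or_k$ similarly. Both are well defined because $\Lam$ and $\Phi$ are $\C$-linear. Surjectivity is immediate:
\begin{itemize}
\item $\vp_k(u\otimes1)=\bar\Phi(u)=1$;
\item $\Lam(a)(0)=\lam(a)$ by Lemma~\ref{lem:Psi}(a), so choosing $h$ with $\lam(h)=1$ makes $\bar\Lam(h)$ invertible in $\Or_k$, and $h\otimes\bar\Lam(h)^{-1}$ maps to $1$.
\end{itemize}

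Next I check the conditions \eqref{eq:wf} on pure tensors:
\[
\vp_k\bigl((a\otimes f)(z\otimes g)\bigr)=\overline{\Phi(az)}\,fg=\bar\Lam(a)\bar\Phi(z)fg=\lam_k(a\otimes f)\,\vp_k(z\otimes g),
\]
using the hypothesis $\Phi(az)\equiv\Lam(a)\Phi(z)\pmod{s^k}$. The condition $\lam_k(A_{\Or_k}A_{\Or_k})=0$ follows in the same way from $\Lam(ab)\equiv0$. Bilinearity extends both identities to all of $A_{\Or_k}$ and $M_{\Or_k}$. Thus $(\lam_k,\vp_k)$ is a weight functional of $B_{\Or_k}$ in the sense of Definition~\ref{def:wf}.

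Now I apply Theorem~\ref{thm:WF} with $u\otimes1$, which satisfies $\vp_k(u\otimes1)=1$. The associated functional is
\[
\sigma_k(a\otimes1)=\lam_k\bigl((au)u\otimes1\bigr)-\lam_k(a\otimes1)\lam_k(uu\otimes1)=\overline{\Lam((au)u)}-\bar\Lam(a)\bar\Lam(uu)=\bar\tau(a).
\]
Formula \eqref{eq:WF} for $a\otimes1$, $x\otimes1$, $y\otimes1$ then reads exactly as the first congruence in \eqref{eq:WF-k}. The relation $\sigma_k(A_{\Or_k}A_{\Or_k})=0$, applied to $(a\otimes1)(b\otimes1)=ab\otimes1$, gives $\bar\tau(ab)=0$, which is the second congruence.

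The only point that needs care is confirming that Theorem~\ref{thm:WF} genuinely holds over a ring and not just a field. I expect this to be the main obstacle, though only a mild one. Its proof, via Lemmas~\ref{lem:scalar} and~\ref{lem:basic2}, uses only:
\begin{itemize}
\item the decomposition $S_1=Ru\oplus H$;
\item division by $2$;
\item the existence of $h$ with $\lam(h)=1$, which is where surjectivity was needed.
\end{itemize}
Freeness and the field property are never used, so the argument transfers to $\Or_k$ verbatim.
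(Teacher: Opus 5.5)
Your proposal is correct and follows the paper's proof essentially step for step. You extend $\bar\Lam,\bar\Phi$ $\Or_k$-linearly to a weight functional $(\lam_k,\vp_k)$ of $B_{\Or_k}$, identify $\sigma_k$ with $\bar\tau$, and invoke Theorem~\ref{thm:WF} over $\Or_k$. The ring-versus-field point you flag as the main obstacle is not an obstacle: the paper already states and proves Theorem~\ref{thm:WF} for an arbitrary commutative ring $R$ in which $2$ is invertible.
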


\begin{proof}
By Proposition~\ref{prop:BO}, $B_{\Or_k}$ is a binary Lie superalgebra over $\Or_k$. Let
$\lam_k\colon A_{\Or_k}\to\Or_k$ and $\vp_k\colon M_{\Or_k}\to\Or_k$ be the $\Or_k$-linear
extensions of $a\mapsto\overline{\Lam(a)}$ and $z\mapsto\overline{\Phi(z)}$ (they exist and are
unique since $B_{\Or_k}$ is free over $\Or_k$ with a basis of $B$). The hypothesis says exactly
that~\eqref{eq:wf} holds for basis elements, hence, by bilinearity, for all elements. Moreover,
$\vp_k(u)=1$, and if $\lam(h)=1$, then $\lam_k(h)$ has constant term $1$ and is invertible. So
$(\lam_k,\vp_k)$ is a weight functional of $B_{\Or_k}$. For this pair we have
$\lam_k(xy)=\overline{\Lam(xy)}$ and $\sigma_k(a)=\overline{\tau(a)}$ for $a\in A$, $x,y\in M$,
where $\sigma_k$ is the functional $\sigma$ defined for $(\lam_k,\vp_k)$ and $u$.
Theorem~\ref{thm:WF} for $B_{\Or_k}$ gives~\eqref{eq:WF-k}.
\end{proof}

\begin{theorem}
\label{thm:sigma} Let $B=A\oplus M$ be a complex finite-dimensional binary
Lie superalgebra and let $(\lambda,\varphi)$ be a weight functional of $B$.
Then $\sigma=0$, that is, 
\begin{equation}  \label{eq:inv}
\lambda\bigl((ax)y\bigr)=\lambda(a)\lambda(xy)\qquad (a\in A,\ x,y\in M).
\end{equation}
\end{theorem}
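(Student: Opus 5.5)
We argue by induction on $k$ that $\Phi(az)\equiv\Lam(a)\Phi(z)$ and $\Lam(ab)\equiv0\pmod{s^k}$ for all $a,b\in A$, $z\in M$. Since $\bigcap_k s^k\Cs=0$, both identities then hold exactly in $\Cs$. In other words, $(\Lam,\Phi)$ is a weight functional of $B_{\Cs}$, or rather of each $B_{\Or_k}$ compatibly. From this we extract $\sigma=0$ by a finite-dimensionality argument.

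\textbf{Base case and inductive step.} For $k=1$ the statement is~\eqref{eq:wf} itself, by Lemma~\ref{lem:Psi}(a). Assume it holds modulo $s^k$. Lemma~\ref{lem:trunc} then gives~\eqref{eq:WF-k}: the weight formula for $(\Lam,\Phi)$ modulo $s^k$, together with $\tau(ab)\equiv0\pmod{s^k}$.

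First take $D(a,b)=\Lam(ab)$. Its constant term is $\lam(ab)=0$, and by Lemma~\ref{lem:Psi}(c) its derivative is $\tau(ab)\equiv0\pmod{s^k}$. Hence $\Lam(ab)\equiv0\pmod{s^{k+1}}$.

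Next take $E(a,z)=\Phi(az)-\Lam(a)\Phi(z)$. Its constant term vanishes. By Lemma~\ref{lem:Psi}(d),
\[
\partial E=\Lam\bigl((az)u\bigr)-\Lam(a)\Lam(zu)-\tau(a)\Phi(z)-\Lam(uu)E .
\]
Substituting~\eqref{eq:WF-k} with $x=z$, $y=u$ and $\Phi(u)=1$, the first three terms cancel modulo $s^k$. The last term is $\equiv0\pmod{s^k}$ by the inductive hypothesis. So $\partial E\in s^k\Cs$, and therefore $E\in s^{k+1}\Cs$. This closes the induction.

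\textbf{Consequences of the exact identities.} We now have $\Phi(az)=\Lam(a)\Phi(z)$ and $\Lam(ab)=0$ exactly. By Lemma~\ref{lem:Psi}(c), $\partial\Lam(a)=\tau(a)$, and Lemma~\ref{lem:trunc} holds for every $k$, so the weight formula for $(\Lam,\Phi)$ holds exactly as well. Expanding $\Lam(a)=\Phi(au)$, the coefficients are
\[
\vp\bigl(\Theta^j(au)\bigr)/j!
\]
corrected by the normalizing factor $\Psi(u)^{-1}$. The aim is to use the exact relation $\partial\Lam(a)=\tau(a)$ together with finite dimensionality: $\Psi(z)=\vp(e^{s\Theta}z)$ is an exponential polynomial in $s$, built from the eigenvalues of $\Theta$ on $M$. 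This forces the derivatives of $\Lam$ to obey an algebraic relation that is incompatible with $\sigma(a)\neq0$.

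\textbf{Main obstacle.} Extracting the contradiction is the hard step. The plan is to compute $\tau(a)$ via the weight formula applied to $x=y=u$, and $\tau(ab)=0$, and then take $a=h$ with $\lam(h)=1$. One should then find that $\Lam(h)$ satisfies a Riccati-type equation, roughly $\partial\Lam=\tau$ with $\tau$ itself expressible in terms of $\Lam$ and $\Lam(uu)$. From $\Psi$ one gets $\partial\Phi(z)=\Lam(uz)-\Phi(z)\Lam(uu)$, and applying this to $z=hu$ gives a closed system.

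A natural route is the following. Since $\Phi(hz)=\Lam(h)\Phi(z)$, the operator $L_h$ (left multiplication by $h$) on $M$ has $\Lam(h)$ as an ``eigenvalue'' of the functional $\Phi$ for every $s$. Hence $\Lam(h)(s)$ takes values in the finite spectrum of $L_h$ for every sufficiently small complex $s$; both $\Psi$ and $\Lam(h)$ are entire functions of exponential type. So $\Lam(h)$ is constant, equal to $\lam(h)$. Then $\tau(h)=\partial\Lam(h)=0$, whose constant term is $\sigma(h)$. Since $\sigma(A^2)=0$ and we may vary $h$ in the coset $h+\ker\lam$, the same argument applies to every $a$ with $\lam(a)\ne0$. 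These elements span $A$, so $\sigma=0$ on all of $A$.

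The most delicate point is justifying that the formal series converge, so that the spectral argument can be applied pointwise in $s$. This holds because $\Psi(z)=\vp(e^{s\Theta}z)$ is entire and $\Psi(u)$ is nonzero near $0$.
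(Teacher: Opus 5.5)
Your proof is correct and is essentially the paper's. The same induction on $k$ gives the exact identities $\Phi(az)=\Lambda(a)\Phi(z)$ and $\Lambda(ab)=0$; then $\Lambda(a)$ is constant because it is an eigenvalue of left multiplication $L_a$ on $M$, so $\tau(a)=\partial\Lambda(a)=0$ and $\sigma(a)=\tau(a)(0)=0$.

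The one difference is how you prove constancy. You do it analytically: you evaluate the convergent series at small $s$, note $\Phi_s\neq0$ since $\Phi_s(u)=1$, and use that the spectrum of $L_a$ is finite. The paper stays formal, using Cayley--Hamilton to get $\prod_i(\Lambda(a)-\alpha_i)=0$ in the integral domain $\Cs$.

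This step works directly for every $a\in A$. So the restriction to $\lambda(h)\neq0$ and the Riccati discussion are unnecessary. The claim that $\Lambda(h)$ is entire is also unneeded, and it is not quite right: $\Lambda(h)$ is only analytic near $0$.
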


\begin{proof} Our proof is based on the following three steps.
\begin{enumerate}
 \item[{\bf Step 1:}] We prove that $\Phi(az)=\Lam(a)\Phi(z)$ and $\Lam(ab)=0$ for all $a,b\in A$ and $z\in M$. 
 
 It
suffices to prove $\Phi(az)\equiv\Lam(a)\Phi(z)$ and $\Lam(ab)\equiv0\pmod{s^k}$ for all $k\ge1$;
we use induction on $k$. For $k=1$ the constant terms are $\vp(az)-\lam(a)\vp(z)=0$ and
$\lam(ab)=0$ by Lemma~\ref{lem:Psi}(a). Assume the congruences modulo $s^k$. By
Lemma~\ref{lem:trunc} with $x=z$, $y=u$ and $\Phi(u)=1$,
\[
\Lam\bigl((az)u\bigr)-\Lam(a)\Lam(zu)-\tau(a)\Phi(z)\in s^k\Cs ,
\]
so $\partial\bigl(\Phi(az)-\Lam(a)\Phi(z)\bigr)\in s^k\Cs$ by Lemma~\ref{lem:Psi}(d); also
$\partial\Lam(ab)=\tau(ab)\in s^k\Cs$ by Lemmas~\ref{lem:Psi}(c) and~\ref{lem:trunc}. Since
$\Phi(az)-\Lam(a)\Phi(z)$ and $\Lam(ab)$ have zero constant terms,
$\Phi(az)-\Lam(a)\Phi(z)\in s^{k+1}\Cs$ and $\Lam(ab)\in s^{k+1}\Cs$.

\item[{\bf Step 2:}] We prove that $\Lam(a)$ is a constant. 

Fix $a\in A$ and let $\rho\colon M\to M$, $\rho(z)=az$. By
Step~1, $\Phi\bigl(\rho^m(z)\bigr)=\Lam(a)^m\Phi(z)$ for all $m\ge0$, hence
$\Phi\bigl(p(\rho)(z)\bigr)=p\bigl(\Lam(a)\bigr)\Phi(z)$ for every polynomial $p\in\C[t]$. Let
$\chi(t)=\prod_{i=1}^n(t-\alpha_i)$ be the characteristic polynomial of $\rho$
($n=\dim M\ge1$). By the Cayley--Hamilton theorem,
$0=\Phi\bigl(\chi(\rho)(u)\bigr)=\prod_i\bigl(\Lam(a)-\alpha_i\bigr)$. Since $\Cs$ is a domain, $\Lam(a)=\alpha_i$ for
some $i$.

\item[{\bf Step 3:}] By Step~2 and Lemma~\ref{lem:Psi}(c), $\tau(a)=\partial\Lam(a)=0$, and by
Lemma~\ref{lem:Psi}(a), $\sigma(a)=\tau(a)(0)=0$. Now~\eqref{eq:WF} becomes~\eqref{eq:inv}.
\end{enumerate}\end{proof}

\section{Proof of Theorems A and B}

\label{sec:proof}

In this section $B=A\oplus M$ is a complex finite-dimensional binary Lie
superalgebra with a weight functional $(\lambda,\varphi)$.

\begin{lemma}
\label{lem:rank1} $\lambda(xy)\lambda(zt)=\lambda(xz)\lambda(yt)$ for all $%
x,y,z,t\in M$.
\end{lemma}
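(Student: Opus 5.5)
The plan is to apply $\lambda$ to the super binary Lie identity $\mathrm{SBL}(x,y,z,t)=0$ with all four arguments odd. Then every term becomes a product of two values of the symmetric bilinear form $\beta(x,y)=\lambda(xy)$ on $M$, by Theorem~\ref{thm:sigma}. Recall $\lambda(xy)=\lambda(yx)$ for $x,y\in M$ by~\eqref{eq:rules}. The key tool is~\eqref{eq:inv}: for $a\in A$ and $v,w\in M$ we have $\lambda\bigl((av)w\bigr)=\lambda(a)\lambda(vw)$. We use it with $a$ itself a product of two odd elements, so $a\in A$ and $\lambda(a)=\beta(\cdot,\cdot)$.

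First I rewrite each of the eight terms of~\eqref{eq:SBL} in the normal form $(av)w$ with $a\in A$ even and $v,w\in M$. This uses $xa=-ax$ and $vw=wv$ from~\eqref{eq:rules}. All signs $(-1)^{\bar x\bar y}$ and $(-1)^{\bar z\bar t}$ equal $-1$. For example, $x\bigl(y(zt)\bigr)=-x\bigl((zt)y\bigr)$, and applying $\lambda$ gives $-\beta(z,t)\beta(x,y)$. Likewise $\lambda\bigl(((xy)z)t\bigr)=\beta(x,y)\beta(z,t)$ and $\lambda\bigl(y((xz)t)\bigr)=\beta(x,z)\beta(y,t)$.

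Carrying this out term by term, the expected contributions are as follows. The first two terms give $2\beta(x,y)\beta(z,t)$. The braced group with sign $(-1)^{\bar x\bar y}$ gives $\beta(x,y)\beta(z,t)-2\beta(x,z)\beta(y,t)$. The braced group with sign $(-1)^{\bar z\bar t}$ gives the same. Summing yields
\[
0=\lambda\bigl(\mathrm{SBL}(x,y,z,t)\bigr)=4\bigl(\lambda(xy)\lambda(zt)-\lambda(xz)\lambda(yt)\bigr),
\]
and dividing by $4$ gives the lemma.

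The only real obstacle is sign bookkeeping. Each term must be rewritten carefully with the even factor first, and the order of the odd factors inside $\lambda$ must be tracked. I would double-check by verifying that the coefficient of $\beta(x,t)\beta(y,z)$ indeed vanishes. No term should produce this pairing, since in each term $x$ pairs with $y$ or $z$.
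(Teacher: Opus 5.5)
Your proposal is correct and takes the same approach as the paper. You apply $\lambda$ to $\mathrm{SBL}(x,y,z,t)$ with all four arguments odd, rewrite each of the eight terms with the even product first using~\eqref{eq:rules}, evaluate each by~\eqref{eq:inv}, and obtain $0=4\lambda(xy)\lambda(zt)-4\lambda(xz)\lambda(yt)$; your term-by-term contributions, including the signs of the two braced groups, agree with the paper's list of the eight normalized terms.
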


\begin{proof}
For $x',y',z',t'\in M$ we have $x'y'\in A$, so Theorem~\ref{thm:sigma} gives
$$\lam\Bigl(\bigl((x'y')z'\bigr)t'\Bigr)=\lam(x'y')\lam(z't').$$ 
By~\eqref{eq:rules}, the eight terms of
$\SBL{x,y,z,t}$ for odd $x,y,z,t$ are
\[
\bigl((xy)z\bigr)t,\ \ \bigl((zt)y\bigr)x,\ \ -\bigl((xz)t\bigr)y,\ \ \bigl((zt)x\bigr)y,\ \ -\bigl((xz)y\bigr)t,\ \ -\bigl((yt)z\bigr)x,\ \
\bigl((xy)t\bigr)z,\ \ -\bigl((yt)x\bigr)z .
\]
Applying $\lam$ and using $mm'=m'm$ for $m,m'\in M$, we get
$0=4\lam(xy)\lam(zt)-4\lam(xz)\lam(yt)$.
\end{proof}

\begin{lemma}
\label{lem:ideal-K-R} Let $M^{\perp}=\bigl\{r\in M:\lambda(Mr)=0\bigr\}$.
Then $\ker\lambda\oplus M^{\perp}$ is a proper graded ideal of $B$.
\end{lemma}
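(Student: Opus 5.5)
We apply the ideal criterion of Lemma~\ref{lem:ideal} with $I_0=\ker\lambda$ and $I_1=M^{\perp}$. This reduces the statement to four inclusions, plus a check that the ideal is proper. Properness is immediate: $\lambda\neq0$, so $\ker\lambda\neq A$, and hence $I_0\oplus I_1\neq B$.

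The four inclusions are as follows.

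\emph{(i)} $A\ker\lambda\subseteq\ker\lambda$. This holds because $\lambda(AA)=0$ by \eqref{eq:wf}.

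\emph{(ii)} $AM^{\perp}\subseteq M^{\perp}$. Let $a\in A$, $r\in M^\perp$ and $y\in M$. We must show $\lambda\bigl(y(ar)\bigr)=\lambda\bigl((ar)y\bigr)=0$. By Theorem~\ref{thm:sigma},
\[
\lambda\bigl((ar)y\bigr)=\lambda(a)\lambda(ry)=0,
\]
since $\lambda(ry)=\lambda(yr)=0$.

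\emph{(iii)} $\ker\lambda\cdot M\subseteq M^{\perp}$. Let $a\in\ker\lambda$ and $x,y\in M$. Again by Theorem~\ref{thm:sigma},
\[
\lambda\bigl(y(ax)\bigr)=\lambda\bigl((ax)y\bigr)=\lambda(a)\lambda(xy)=0.
\]

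\emph{(iv)} $MM^{\perp}\subseteq\ker\lambda$. This is the definition of $M^\perp$.

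So all four conditions follow almost directly from \eqref{eq:inv} together with $\lambda(AA)=0$.

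There is one subtlety. Lemma~\ref{lem:ideal} uses the product conventions \eqref{eq:rules}, so we should check that no condition involves the product in the ``wrong'' order, such as $M\ker\lambda$. By symmetry of the products in \eqref{eq:rules}, $\ker\lambda\cdot M$ is the relevant one, and it is covered by (iii). Lemma~\ref{lem:rank1} is not needed for this statement; it will presumably be used later to identify $M^\perp$ as a hyperplane, or to pin down the structure further.

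I do not expect a genuine obstacle here. The only step that is more than definitional is (ii)/(iii), and both are single applications of Theorem~\ref{thm:sigma}.
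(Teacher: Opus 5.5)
Your proposal is correct and follows the paper's proof essentially verbatim: the paper also checks the four conditions of Lemma~\ref{lem:ideal} for $\ker\lambda\oplus M^{\perp}$, using $\lambda(AA)=0$, the identity \eqref{eq:inv} (twice), and the definition of $M^{\perp}$. It gets properness in the same way, from $\lambda\neq0$. Your remark that $\lambda\bigl(y(ar)\bigr)=\lambda\bigl((ar)y\bigr)$ by the symmetry of odd products is exactly the implicit step the paper uses.
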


\begin{proof}
Put $K=\ker\lam$; it is a proper subspace of $A$, since $\lam\neq0$. We check the conditions of
Lemma~\ref{lem:ideal} using~\eqref{eq:inv}: $AK\subseteq AA\subseteq K$; if $r\in M^{\perp}$, then
$\lam\bigl((ar)y\bigr)=\lam(a)\lam(ry)=0$, so $AM^{\perp}\subseteq M^{\perp}$; if $e\in K$, then
$\lam\bigl((ex)y\bigr)=\lam(e)\lam(xy)=0$, so $KM\subseteq M^{\perp}$; finally, $\lam(xr)=0$ for
$x\in M$, $r\in M^{\perp}$, so $MM^{\perp}\subseteq K$.
\end{proof}

\begin{theorem}
\label{thm:B2} If $B$ is simple, then $\dim A=\dim M=1$ and $B\cong B_2$.
\end{theorem}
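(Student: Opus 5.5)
By Lemma~\ref{lem:ideal-K-R}, $\ker\lambda\oplus M^{\perp}$ is a proper graded ideal of $B$. Simplicity therefore forces $\ker\lambda=0$ and $M^{\perp}=0$. Since $\lambda\neq0$, the first equality gives $\dim A=1$. Write $A=\mathbb{C}e$ with $\lambda(e)=1$. Then $ee=0$ by anticommutativity, so $\lambda(AA)=0$ holds automatically.

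Next I show $\dim M=1$, using Lemma~\ref{lem:rank1}. Consider the symmetric bilinear form $\beta(x,y)=\lambda(xy)$ on $M$. The identity $\beta(x,y)\beta(z,t)=\beta(x,z)\beta(y,t)$ says that all $2\times2$ minors of the Gram matrix of $\beta$ vanish, so $\beta$ has rank at most one. Its radical is precisely $M^{\perp}$, which is zero. Hence $\beta$ is nondegenerate of rank $\le1$, so $\dim M\le1$. Since $M\neq0$, we get $\dim M=1$.

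Now write $M=\mathbb{C}x$ and determine the products. Since $A=\mathbb{C}e$, we have $xx=\mu e$ with $\mu=\lambda(xx)$, and $\mu\neq0$ because $\beta$ is nondegenerate. Also $ex=\nu x$, and the relation $\varphi(ex)=\lambda(e)\varphi(x)$ with $\varphi(x)\neq0$ gives $\nu=1$. Rescale $x$ by $\mu^{-1/2}$. This leaves $ex=x$ unchanged and makes $xx=e$. Together with $ee=0$ and $xe=-ex$ from \eqref{eq:rules}, these are exactly the defining relations of $B_2$ in Example~\ref{ex:B2}.

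The only point needing care is the rank argument. Setting $z=x$, $t=y$ in Lemma~\ref{lem:rank1} gives $\beta(x,y)^2=\beta(x,x)\beta(y,y)$. Together with the full identity, this shows that if $\beta(x_0,x_0)\neq0$ for some $x_0$, then $\beta(y,t)=\beta(x_0,y)\beta(x_0,t)/\beta(x_0,x_0)$, so $\beta$ is a rank-one form. If instead $\beta(y,y)=0$ for all $y$, then $\beta$ vanishes identically by symmetry and polarization. In that case $M^{\perp}=M\neq0$, contradicting simplicity. Everything else is a direct consequence of the preceding lemmas.
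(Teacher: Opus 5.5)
Your proposal is correct and follows the paper's proof: simplicity kills the proper ideal $\ker\lambda\oplus M^{\perp}$, Lemma~\ref{lem:rank1} together with non-degeneracy of $(x,y)\mapsto\lambda(xy)$ forces $\dim M=1$, and rescaling $x$ gives the table of $B_2$. The only cosmetic difference is in ruling out $\dim M\ge2$. You deduce rank at most one from the formula $\beta(y,t)=\beta(x_0,y)\beta(x_0,t)/\beta(x_0,x_0)$, while the paper picks $x,z$ with $\lambda(xx)=\lambda(zz)=1$, $\lambda(xz)=0$ and applies Lemma~\ref{lem:rank1} to $(x,x,z,z)$ to get $1=0$.
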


\begin{proof}
By Lemma~\ref{lem:ideal-K-R} and simplicity, $\ker\lam=0$ and $M^{\perp}=0$, i.e., the symmetric
bilinear form $(x,y)\mapsto\lam(xy)$ on $M$ is non-degenerate. If $\dim M\ge2$, then, since this is a
non-degenerate symmetric bilinear form on a complex vector space, there are $x,z\in M$ with $\lam(xx)=\lam(zz)=1$ and $\lam(xz)=0$, and Lemma~\ref{lem:rank1} for
$(x,x,z,z)$ gives $1=0$. Hence $M=\C u$ with $\vp(u)=1$, and $A=\C h$ with $\lam(h)=1$, since
$\ker\lam=0$. Now $hh=0$; $hu\in\C u$ and $\vp(hu)=\lam(h)\vp(u)=1$, so $hu=u$; and $uu=\delta h$
with $\delta=\lam(uu)\neq0$. For $\beta\in\C$ with $\beta^2\delta=1$ and $u'=\beta u$ we
get $hh=0$, $hu'=u'$, $u'u'=h$, which is the multiplication table of $B_2$.
\end{proof}

The following theorem was proved in~\cite{GRS}.

\begin{theorem}[see \protect\cite{GRS}]
\label{thm:GRS} Let $B=B_0\oplus B_1$ be a complex finite-dimensional simple
binary Lie superalgebra with $B_1\neq0$. If $B$ is not a Lie superalgebra,
then $B_0$ is solvable.
\end{theorem}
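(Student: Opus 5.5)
The plan is to argue by contradiction: assume $B_0$ is not solvable and show that $B$ must then satisfy the super Jacobi identity. Let $R$ be the solvable radical of the binary Lie algebra $B_0$. By Grishkov's structure theory, $B_0/R$ is a direct sum of simple binary Lie algebras, each of which is Malcev: a simple Lie algebra or the seven-dimensional Malcev algebra. I would first lift a simple component to $B_0$ itself, using a Levi-type splitting for binary Lie algebras in characteristic zero. This gives a non-zero semisimple subalgebra $S\subseteq B_0$, and I expect it to contain a copy of $\mathfrak{sl}_2$ in either case, since the seven-dimensional Malcev algebra contains $\mathfrak{sl}_2$. By Lemma~\ref{lem:module}, $M=B_1$ is a binary Lie $B_0$-module and hence a binary Lie $S$-module. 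The first substantive step is to show that finite-dimensional binary Lie modules over $\mathfrak{sl}_2$, or over $S$, are ordinary completely reducible modules. I would do this by linearizing the identity $\bigl((xy)y\bigr)x+\bigl((yx)x\bigr)y=0$ in the split null extension and checking it on weight vectors, in the spirit of Grishkov's Theorem~\ref{thm:G}. The outcome should be that $\operatorname{ad}h$ acts semisimply on all of $B$ with integer eigenvalues.

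Next, I would consider the super-Jacobian
\[
J(x,y,z)=(-1)^{\bar x\bar z}(xy)z+(-1)^{\bar y\bar x}(yz)x+(-1)^{\bar z\bar y}(zx)y .
\]
For a binary Lie superalgebra, the ideal generated by the values of $J$ is spanned by $J(B,B,B)$ together with products of $B$ with it. Since $B$ is simple and not Lie, this ideal is all of $B$. Two facts would drive the argument. First, $J$ is compatible with the $\operatorname{ad}h$-grading, so every homogeneous value of $J$ has a well-defined $h$-weight. Second, the binary identity~\eqref{eq:SBL} with one argument equal to $h$, $e$ or $f$ of the $\mathfrak{sl}_2$-triple expresses how the even elements act on Jacobians. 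I would aim for a derivation-type formula
\[
c\,J(x,y,z)=J(cx,y,z)+J(x,cy,z)+J(x,y,cz)\qquad (c\in S),
\]
possibly up to correction terms that vanish on weight vectors. Such a formula would make $J\colon B^{\otimes 3}\to B$ an $S$-module map.

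Granting equivariance, I would decompose $B$ into $\mathfrak{sl}_2$-isotypic components. I would then show that the span $\mathcal J$ of all Jacobians, together with the $B_0$-submodule it generates, misses the $S$-invariant part of $B$ in a suitable sense, which gives a proper graded ideal. Checking proper-ness is where Lemma~\ref{lem:ideal} would be applied, component by component. Simplicity would then force $\mathcal J=0$, so $B$ is a Lie superalgebra, contradicting the hypothesis. As a sanity check, $B_2$ has $B_0=\mathbb{C}e$ solvable, consistent with the statement.

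The main obstacle is the equivariance step. Identity~\eqref{eq:SBL} is only a consequence of two-generated Lie-ness, so $\operatorname{ad}c$ need not be a derivation of $B$, and a Jacobian with an even argument need not vanish. If the direct derivation formula fails, I would first pass to the Grassmann envelope $G(B)$, where the problem becomes one about a binary Lie algebra. There I would use Grishkov's result that, in a binary Lie algebra, the elements acting as derivations include those coming from a Malcev-type subalgebra. The aim is to show that $\operatorname{ad}$ of the $\mathfrak{sl}_2$-triple is a derivation of $G(B)$, and then descend this to $B$.
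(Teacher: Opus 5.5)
The paper gives no proof of this statement. It is quoted from \cite{GRS} and used as a black box in the proof of Theorem~A, so your proposal has to stand on its own. As written it does not: one intermediate claim is false, and the central lemma is unsupported.

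\textbf{The module step.} The claim that finite-dimensional binary Lie modules over $\mathfrak{sl}_2$ are ordinary (Lie) $\mathfrak{sl}_2$-modules is false, so no check on weight vectors can prove it. Every Malcev module is a binary Lie module, and non-Lie ones exist. In the seven-dimensional Malcev algebra $\mathbb{O}_0^-$, choose a quaternion subalgebra $\mathbb{H}\cong M_2(\mathbb{C})$ and write $\mathbb{O}_0=\mathbb{H}_0\oplus\mathbb{H}\ell$. This is a $\mathbb{Z}_2$-grading, so the split null extension $\mathbb{H}_0\ltimes\mathbb{H}\ell$ is a degeneration of $\mathbb{O}_0^-$. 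Hence it is Malcev, and $\mathbb{H}\ell$ is a binary Lie module over $\mathbb{H}_0\cong\mathfrak{sl}_2$. The action is $[a,y\ell]=2(ya)\ell$, i.e.\ $\rho(a)=2R_a$, and $[\rho(a),\rho(b)]=-2\rho([a,b])$, so the module is not Lie. Since $M=B_1$ is only known to be a binary Lie $B_0$-module, components of this kind must be handled, not excluded. Complete reducibility and semisimplicity of $\operatorname{ad}h$ might survive, but they do not follow from ordinary $\mathfrak{sl}_2$-theory, and you give no argument for them.

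A minor point: you do not need a Levi theorem for binary Lie algebras, which you would otherwise have to justify. Lift to $B_0$ a pair generating $\mathfrak{sl}_2$ in $B_0/R$. The lifts generate a Lie subalgebra because $B_0$ is binary Lie, and the classical Levi theorem then gives $\mathfrak{sl}_2\subseteq B_0$.

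\textbf{The equivariance step.} The formula $cJ(x,y,z)=J(cx,y,z)+J(x,cy,z)+J(x,y,cz)$, on which everything afterwards rests, is only hoped for. The binary Lie identities cannot give it by themselves, as the same example shows.
\begin{itemize}
\item In $\mathbb{O}_0^-$ one has $J(x,y,z)=6(x,y,z)$.
\item For $a\in\mathbb{H}_0$ write $\operatorname{ad}_a=D+E$. Here $D$ is a derivation: it equals $\operatorname{ad}_a$ on $\mathbb{H}_0$ and $y\ell\mapsto-(ya)\ell$ on $\mathbb{H}\ell$. The remainder $E$ is $0$ on $\mathbb{H}_0$ and $3R_a$ on $\mathbb{H}\ell$.
\item For $x,y\in\mathbb{H}_0$ and $w\in\mathbb{H}$, the defect of your formula with $c=a$ is $18\bigl(w[[x,y],a]\bigr)\ell$.
\item This defect is non-zero already on $\operatorname{ad}h$-weight vectors, e.g.\ $x=a=e$, $y=f$, $w=E_{11}$, where it equals $36E_{12}\ell$.
\end{itemize}
So the $\operatorname{ad}$-action of an $\mathfrak{sl}_2$-subalgebra of a binary Lie (even Malcev) algebra need not be by derivations, nor even $J$-equivariant. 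Your Grassmann-envelope fallback asks for exactly the general statement this example refutes. Any proof of equivariance would have to use something specific to simple binary Lie superalgebras, and you do not identify what that is.

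\textbf{The final step.} Even granting equivariance, this step has no mechanism. You do not justify describing the Jacobian ideal as $J(B,B,B)+BJ(B,B,B)$ for binary Lie superalgebras. You also do not say which isotypic component the Jacobians avoid, or why the resulting subspace satisfies the conditions of Lemma~\ref{lem:ideal}. What you have is a research programme, not a proof of the theorem.
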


\begin{proof}[Proof of Theorem~A]
Let $B=A\oplus M$ be a complex finite-dimensional simple binary Lie superalgebra with $M\neq0$.
If $B$ is a Lie superalgebra, then it is one of the simple Lie superalgebras of Kac's
list~\cite{Kac}. Otherwise $A$ is solvable by Theorem~\ref{thm:GRS}, $B$ has a weight functional
by Proposition~\ref{prop:exist}, and $B\cong B_2$ by Theorem~\ref{thm:B2}.
\end{proof}

\begin{proof}[Proof of Theorem~B]
If $B_1\neq0$, we apply Theorem~A. If $B_1=0$, then $B$ is a complex simple finite-dimensional binary Lie
algebra; by the theorem of Grishkov~\cite{G80} it is a Malcev algebra, and by the
theorem of Filippov~\cite{Fil} it is either a simple Lie algebra or the seven-dimensional simple
Malcev algebra.
\end{proof}

\end{document}